\documentclass[11pt,oneside,english,reqno]{amsart}

\usepackage{xcolor}
\definecolor{stronggreen}{HTML}{009900}
\usepackage{amsmath,amssymb,amsthm}
\usepackage{geometry}
\usepackage{hyperref}
\usepackage{enumitem}
\usepackage{blindtext}
\usepackage{graphicx}
\usepackage[utf8]{inputenc}
\usepackage[normalem]{ulem}
\newtheorem{theorem}{Theorem}[section]

\newtheorem{remark}{Remark}[section]
\newtheorem{lemma}{Lemma}[section]
\newtheorem{proposition}{Proposition}[section]
\newtheorem{corollary}{Corollary}[section]
\newcommand{\R}{\mathbb{R}}

\newenvironment{acknowledgements}%
  {\section*{Acknowledgments}}%
  {}

\usepackage{xcolor}

\title[Free boundary for  quasilinear systems]{\bf A Free Boundary Problem for  quasilinear Systems  \\ with mixed variable exponent}

\author{Somayeh Khademloo}

\keywords{Free boundary, Non-variational Systems, Variable exponents}
\subjclass{35R35, 35J47}

\begin{document}
\maketitle

\begin{abstract}
In the unit ball, we study a semilinear system that gives rise to a free boundary of Alt-Phillips type, where the equation is governed by a mixed variable-exponent operator of $(p,q)$-Laplacian type. 

Under certain structural conditions, we establish the existence of nonnegative radial solutions that are $C^1$, with their norms depending on the structural data.

\end{abstract}

\tableofcontents
\section{Introduction and problem formulation}
\subsection{Background}

In this article, we consider a system of strongly coupled quasilinear system in the unit ball with constant boundary values, which covers both regular and singular right-hand sides; see equation \eqref{eq:main-system-variable}. Our problem, due to the right-hand-side absorption term, gives rise to a free boundary of the so-called Alt-Phillips type, provided the Dirichlet data is not large.
Such problems in scalar case are well studied for a wide range of operators, including  fully nonlinear and $p$-Laplace operators; see e.g.  
\cite{AlamriUrbano2026, AltPhi86, AraTei13} and the references therein.

Continuing along this line, we study a two-component case with two different variable exponents, as given in equation \eqref{eq:main-system-variable}.  
 This model problem, in the case of constant exponents and systems of $m$ components, has been studied in \cite{BiagiValdinociVecchi2020,ElShah,MDS-Nor}.
 While the authors in \cite{ElShah} assume the existence of certain barriers to prove the existence of solutions, in the present work, we are able to circumvent this assumption by applying a direct method to prove the existence of solutions.
 
In recent years (see \cite{Ferra, Juli, Led, Harj2007}), there has been increasing interest in free boundary problems with \emph{variable exponent growth}, where the diffusion is governed by operators of the form
\[
\mathrm{div}\big(|\nabla u|^{p(x)-2}\nabla u\big).
\]
Indeed,  variable exponent models are important for capturing physical processes where material properties change according to local conditions. Common examples include electrorheological fluids \cite{Diening2011, Ruzicka2000}, where viscosity shifts in response to electric fields, and image restoration \cite{Chen2006}, where the exponent $p(x)$ is tuned to preserve edges while smoothing noise. These problems share a common structure: the free boundary marks the transition between distinct physical states. While the $p(x)$-Laplacian is well-suited to model this adaptation, it also discards the homogeneity and scaling properties we rely on in standard elliptic theory. This loss of structure is precisely what makes the analytical treatment of these free boundaries so challenging.

It should be mentioned that El Hajj-Jeon-Shahgholian \cite{El-Je-Sh-2} have recently extended the result of \cite{ElShah} to more general domains, for $p$-Laplacian as well as fully nonlinear operators. Their approach has a good chance of working for our problem, but it would require developing the scalar case of the $(p,q)$-Laplacian of Alt-Phillips problem as a first stage, which is currently not available in the literature.

\subsection{Problem setting}
Let $B_1$ denote the unit ball in $\R^n$ ($n \geq 2$,) and 
\[
p,q:\overline{B_1}\times [0,\infty)\times [0,\infty)\to (1,\infty)
\]
be radial  functions--representing exponents--in the sense that for any radial function $w$,
the quantities
\[
p(x,w(x),|\nabla w(x)|),\qquad q(x,w(x),|\nabla w(x)|) ,
\]
depend only on $|x|$, $w(|x|)$, and $|w'(|x|)|$.

We denote the quasilinear operator with mixed variable exponents by\footnote{Observe that the coupling of the system is only on the right-hand side and not on the PDE.  }
\[
\mathcal L_{p,q} w:=\mathcal L_p w+\mathcal L_q w,
\]
where
\[
\mathcal L_s w
:=
\nabla\!\cdot\!\Big(|\nabla w|^{\,s(x,w(x),|\nabla w(x)|)-2}\nabla w\Big),\qquad s\in\{p,q\},
\]
and consider a two-component   system of free boundaries
\begin{equation}\label{eq:main-system-variable}
\begin{cases}
\mathcal L_{p,q} u
=
h(|x|,u,v)\,\mathbf 1_{\{u>0\}}
& \text{in } B_1,\\[4pt]
\mathcal L_{p,q} v
=
k(|x|,u,v)\,\mathbf 1_{\{v>0\}}
& \text{in } B_1,\\[4pt]
u=M_1,\qquad v=M_2
& \text{on } \partial B_1,
\end{cases}
\end{equation}
where $M_1,M_2>0$ are prescribed constants and $\mathbf  1_D$ is the characteristic function of a domain $D$. The functions $h,k$ on the right-hand side of \eqref{eq:main-system-variable} will satisfy a certain structure, which we explain below. 

The corresponding free boundaries (which we expect to coincide in our radial case) are denoted by
\[
\partial\{u>0\},
\qquad
\partial\{v>0\}.
\]

Since all the ingredients in this paper are assumed to be radial, we expect any solution to be radial, although we cannot prove that this is the case of variably exponent, while this is true for the constant-coefficient case, as shown in \cite{ElShah}.  Note that the classical method of moving planes cannot be applied directly in the presence of variable coefficients, even when the exponents depend solely on the spatial variable $|x|$. In that setting, reflecting the domain across a moving hyperplane alters the localized values of the exponents, preventing the reflected and original operators from matching. In our framework, the situation is more delicate as the exponents $p(x, w, |\nabla w|)$ and $q(x, w, |\nabla w|)$ depend simultaneously on the spatial coordinates, the solution states, and the gradient magnitudes. This structural coupling breaks both standard scaling invariance and reflection symmetry, making the question of radial symmetry for general variable-coefficient problems a challenging open problem. 

\medskip
\noindent

In a  radial setting, our problem can be formulated as
\begin{equation}\label{eq:main-system-radial}
\begin{cases}
\dfrac{1}{r^{n-1}}\dfrac{d}{dr}\!\left(
r^{n-1}\Big(|u'|^{p(r,u,|u'|)-2}u'+|u'|^{q(r,u,|u'|)-2}u'\Big)
\right)
=
h(r,u,v)\,\mathbf 1_{\{u>0\}},
\\[10pt]
\dfrac{1}{r^{n-1}}\dfrac{d}{dr}\!\left(
r^{n-1}\Big(|v'|^{p(r,v,|v'|)-2}v'+|v'|^{q(r,v,|v'|)-2}v'\Big)
\right)
=
k(r,u,v)\,\mathbf 1_{\{v>0\}},
\end{cases}
\end{equation}
for $r\in(0,1)$.

\medskip
\noindent

We shall further impose the following standing assumptions on the variable exponents and on the sink, i.e.,  terms on the right-hand side.

\medskip

\noindent
\underline{\bf Standing Structural Assumptions:}

\medskip

\noindent
\textbf{(P1) Exponents.}

The mappings 
\[
p, q : [0,1] \times [0,\infty) \times [0,\infty) \to (1,\infty)
\]
are assumed to be locally $C^\alpha $  ($\alpha \in (0,1]$)  with respect to the first two variables on compact subsets, and continuous with respect to the third variable. Moreover, there exist global structural constants such that
\[
1 < p_- \le p(r,s,\xi) \le p_+ < \infty, \qquad 1 < q_- \le q(r,s,\xi) \le q_+ < \infty
\]
for all $(r,s,\xi) \in [0,1] \times [0,\infty) \times [0,\infty)$.
We also define the uniform exponent bounds  by
\[
1 < p_* := \min\{p_-, q_-\} \le \max\{p_+, q_+\} := p^* < \infty.
\]

\medskip
\noindent
\textbf{(P2) Radial monotonicity and structural power regimes near the origin.}

For the functions
\[
h,k:[0,1]\times[0,\infty)^2\to[0,\infty)
\]there exist structural constants $0<\lambda_0\le \Lambda_0<\infty$ and an admissible power pair $(\gamma_1, \gamma_2) \in \mathbb{R}^2$ satisfying exactly one of the following structural parameter regimes
\begin{enumerate}
    \item[\textup{(i)}] \textit{The regular regime:} 
    $0 \leq  \gamma_1, \gamma_2 < 1$;
    
    \item[\textup{(ii)}] \textit{The semi-singular regime:}
    $ -1 < \min\{\gamma_1, \gamma_2\} <0<  \max\{\gamma_1, \gamma_2\} <  1 $;
    \item[\textup{(iii)}] \textit{The fully singular regime:}  $\gamma_1 + \gamma_2 > -1$, and $ \gamma_1, \gamma_2 < 0$.
\end{enumerate}

When $\gamma_1 = \gamma_2 = 0$, the system is decoupled, and we have two scalar problems, which are not of interest in this paper.

 We impose the following condition on the sink-terms $h,k$. 
 For all sufficiently small state values 
 $\varepsilon_1, \varepsilon_2 > 0$ and all $r \in (0,1]$, the  right-hand sides  satisfy the two-sided  growth restrictions
\begin{equation}\label{eq:hk-structure}
\lambda_0\,\varepsilon_1^{\gamma_1}\varepsilon_2^{\gamma_2} \le h(r,\varepsilon_1,\varepsilon_2) \le \Lambda_0\,\varepsilon_1^{\gamma_1}\varepsilon_2^{\gamma_2},
\qquad 
\lambda_0\,\varepsilon_1^{\gamma_1}\varepsilon_2^{\gamma_2} \le k(r,\varepsilon_1,\varepsilon_2) \le \Lambda_0\,\varepsilon_1^{\gamma_1}\varepsilon_2^{\gamma_2}.
\end{equation}
With these restrictions, it also follows that 
\begin{equation}\label{eq:hk-structure1}
\frac{\lambda_0}{\Lambda_0} h(r,\varepsilon_1,\varepsilon_2) \le 
k(r,\varepsilon_1,\varepsilon_2) \le \frac{\Lambda_0}{\lambda_0} h(r,\varepsilon_1,\varepsilon_2).
\end{equation}

\begin{remark}

The free boundary problem \eqref{eq:main-system-variable} presents several distinct mathematical challenges that differentiate it from classical free boundary problems. Here we shall point out these aspects to some extent.

\begin{itemize}

\item[(i)] \textbf{Extending the range of $\gamma_1, \gamma_2$:}  We allow a larger class of right-hand sides as specified in \textup{(P2)} above. The range of powers includes regular, semi-singular, and fully singular regimes.

\item[(ii)] \textbf{Lack of $C^2$ Smoothness:} When at least one of the powers $\gamma_j$ is negative, the solutions $u$ and $v$ fail to be $C^2$ up to the free boundary. This introduces substantial difficulty in the analysis, especially since the problem is non-variational.

\item[(iii)] \textbf{Non-Homogeneity of the Operator:} Unlike the standard $p$-Laplacian, our operator $\mathcal{L}_{p,q}$ involves mixed variable exponents $p, q(x, w, |\nabla w|)$ that depend on the state and the gradient. This lack of homogeneity implies that the operator does not scale uniformly. For this reason, the usual scaling arguments and explicit barrier constructions cannot be applied naturally.
Our approach therefore avoids barrier-based constructions presented in \cite{ElShah} and instead relies on compactness and structural estimates intrinsic to the problem.

\end{itemize}

\end{remark}

\subsection{Main result}

The main result of this work is the following.

\begin{theorem}\label{thm:interior-pq-variable}
Assume that assumptions \textup{(P1)--(P2)} hold.
Then system  \eqref{eq:main-system-variable} admits at least a radial solution $(u,v)$ with the following properties:

The two components are radially symmetric and radially nondecreasing, i.e.,
\[
u(x)=u(|x|),\qquad v(x)=v(|x|),
\]
and
\[
\partial_r u(r)\ge 0,\qquad \partial_r v(r)\ge 0
\qquad \text{for } r\in(0,1).
\]
Moreover, if the boundary data $M_1,M_2>0$ are sufficiently small, then the (common) coincidence set is nonempty, namely
\[
\{u=0\}=\{v=0\}\neq\varnothing.
\]
\end{theorem}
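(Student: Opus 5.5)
We construct radial solutions of the ODE system \eqref{eq:main-system-radial} by regularization, a fixed point, and compactness, avoiding explicit barriers.

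\textbf{Step 1: regularization.} Fix $\varepsilon>0$ and replace the sink terms by
\[
h_\varepsilon(r,u,v)=h(r,u^++\varepsilon,v^++\varepsilon)\,\chi_\varepsilon(u),
\qquad
k_\varepsilon(r,u,v)=k(r,u^++\varepsilon,v^++\varepsilon)\,\chi_\varepsilon(v),
\]
where $\chi_\varepsilon$ is a smooth nondecreasing cutoff with $\chi_\varepsilon=0$ on $(-\infty,0]$ and $\chi_\varepsilon=1$ on $[\varepsilon,\infty)$. The shift by $\varepsilon$ removes the singularity in regimes (ii)--(iii): by \eqref{eq:hk-structure}, $h_\varepsilon,k_\varepsilon$ are bounded by $C(\varepsilon)$ on bounded sets of states.

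\textbf{Step 2: integral formulation and fixed point.} The map $\Phi_r(t)=|t|^{p-2}t+|t|^{q-2}t$, with exponents evaluated at $(r,w,|t|)$, is odd and increasing in $t$. We assume it is strictly increasing and continuous in $t$ for fixed $(r,w)$; this is what (P1) should give, and if monotonicity fails for gradient-dependent exponents we freeze the third argument in the iteration. Under this assumption $\Phi_r$ is invertible. Radial solutions with $w'(0)=0$ then satisfy
\[
w'(r)=\Phi_r^{-1}\Big(r^{1-n}\!\int_0^r s^{n-1}f(s)\,ds\Big),\qquad w(r)=M-\int_r^1 w'(s)\,ds .
\]
We set up a map $T$ on pairs $(u,v)\in C([0,1])^2$: compute the right-hand sides from the current pair, then integrate. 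Since $f\ge0$, the integrand is nonnegative, so $w'\ge0$ and $0\le M-w\le M$ are automatic, the latter after truncating states at $M_i$. This gives a uniform bound on $w'$ through $\Phi^{-1}(C)\le C^{1/(p_*-1)}+1$, and hence equicontinuity. Continuity of $T$ follows from continuity of $h,k$ and of $\Phi^{-1}$ in its parameters. Schauder's theorem then yields a fixed point $(u_\varepsilon,v_\varepsilon)$ that is radial, nondecreasing, and $C^1$, with $w'(0)=0$.

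\textbf{Step 3: $\varepsilon\to0$ (main obstacle).} We need bounds on $u_\varepsilon',v_\varepsilon'$ that are uniform in $\varepsilon$ despite the singular sinks. First, $\Phi(w')\le r^{1-n}\int_0^r s^{n-1}f$, and $f$ vanishes where the component is $\le0$. Near the free boundary we compare $u$ and $v$. By \eqref{eq:hk-structure1}, the two equations have comparable right-hand sides. Integrating and using monotonicity, we aim for $c\,u\le v\le C\,u$ on the positivity set, so that $h\lesssim u^{\gamma_1+\gamma_2}$ there. Then $\Phi(u')\lesssim\int u^{\gamma_1+\gamma_2}$, combined with the monotone substitution $ds=du/u'$, yields an ODE inequality of the form
\[
u'\,\Phi(u')\lesssim u^{\gamma_1+\gamma_2}\cdot(\text{integrable factor}).
\]
Because $\gamma_1+\gamma_2>-1$ in all three regimes, $u^{\gamma_1+\gamma_2}$ is integrable at $u=0$, so $\int\Phi^{-1}$-type quantities stay bounded; this is where the admissibility range is used. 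We then get uniform $C^{0,1}$ bounds and, from the integral formula, a uniform modulus of continuity for $u',v'$. Arzel\`a--Ascoli gives a $C^1$ limit, and the integral equation passes to the limit on $\{u>0\}$ by dominated convergence, since the integrand is dominated by $u^{\gamma_1+\gamma_2}\in L^1$. The comparability $u\sim v$ is the hardest point, as it must hold before the limit, uniformly in $\varepsilon$. I would try first to obtain it from a maximum-principle comparison of $\Phi(u')$ and $\Phi(v')$ through their integral representations.

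\textbf{Step 4: nonempty coincidence set for small data.} Suppose $u>0$ on all of $[0,1]$. Then $h\ge\lambda_0u^{\gamma_1}v^{\gamma_2}$, and in regime (i) the states stay $\le M_i$, so $h\ge \lambda_0M_1^{\gamma_1}M_2^{\gamma_2}$. This gives $\Phi(u')\ge c\,r\,\lambda_0M_1^{\gamma_1}M_2^{\gamma_2}$, and integrating, $M_1\ge u(1)-u(0)\ge c\,(\lambda_0M_1^{\gamma_1}M_2^{\gamma_2})^{1/(p^*-1)}$ for small values. Combined with the analogous inequality for $v$, this contradicts smallness of $M_1,M_2$, because the exponents $\gamma_i<1$ make the right side dominate as $M\to0$. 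In the singular regimes the lower bound $u^{\gamma_1}\ge M_1^{\gamma_1}$ only improves. Hence both sets $\{u=0\}$ and $\{v=0\}$ are nonempty.

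To show they coincide, suppose $u(r_0)=0<v(r_0)$. The structure \eqref{eq:hk-structure} then forces $h\equiv0$ where $u=0$ but $k=0$ too (the factor $u^{\gamma_1}$ when $\gamma_1>0$), while for $\gamma_1<0$ the singularity forces $v$ to vanish simultaneously. Combining this with the comparability $u\sim v$ from Step 3, we conclude $\{u=0\}=\{v=0\}$.
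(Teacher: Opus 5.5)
Your Step 2 (Schauder on the radial integral formulation) is a legitimate alternative to the paper's iterated obstacle problems. It produces a genuine fixed point directly, and it uses the same monotonicity of $t\mapsto t^{p(r,w,t)-1}+t^{q(r,w,t)-1}$ that the paper invokes in Proposition~\ref{prop:uniform-Lip}. Steps 3 and 4, however, have genuine gaps.

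In Step 3 everything rests on $c\,u\le v\le C\,u$, which you do not prove and which nothing in your outline yields uniformly in $\varepsilon$. First, the approximate free boundaries $r_{u_\varepsilon},r_{v_\varepsilon}$ need not coincide. The paper obtains $r_u=r_v$ only for the limit, in Corollary~\ref{cor:cont}, and that proof uses the very uniform flux bound you are after, so invoking it here would be circular. Second, even from a common zero, comparable right-hand sides only give $\Phi_u(u')\sim\Psi_v(v')$. The two fluxes carry different exponents $p(r,u,|u'|)$ and $p(r,v,|v'|)$, so no uniform $u'\sim v'$ follows. Third, the substitution $ds=du/u'$ needs $u'$ bounded below, which is exactly what fails at the free boundary.

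The paper avoids comparing $u$ with $v$ altogether. It takes the first points $t_u\le t_v$ where $u'=M_1$ and $v'=M_2$. Before these points the gradient bound is free and the fluxes are bounded by construction. Beyond $t_v$ both $u',v'\gtrsim 1$, so in regime (iii) H\"older with $a=2/(1-\gamma_1+\gamma_2)$, $b=2/(1+\gamma_1-\gamma_2)$ bounds $\int u^{\gamma_1}v^{\gamma_2}$ by $C(\int u'u^{a\gamma_1})^{1/a}(\int v'v^{b\gamma_2})^{1/b}$ with $a\gamma_1,b\gamma_2>-1$; this is where $\gamma_1+\gamma_2>-1$ enters. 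On $(t_u,t_v)$, the inequality $h\le(\Lambda_0/\lambda_0)k$ bounds $\int h$ by the increment of the $v$-flux, which is controlled because $v'<M_2$ there.

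In Step 4 the key inequality is reversed when $\gamma_j\ge 0$: $u\le M_1$ gives $u^{\gamma_1}\le M_1^{\gamma_1}$, an upper bound on $h$. So your argument only works in regime (iii). In regime (i) it cannot be repaired without an extra hypothesis. Take $p\equiv q\equiv 2$ and $h=k=\sqrt{uv}$. At a common zero $r_0$ both fluxes vanish, and with $w=u+v$,
$2r^{n-1}w'(r)\le 2\int_{r_0}^r s^{n-1}\sqrt{uv}\,ds\le \int_{r_0}^r s^{n-1}w\,ds\le r^{n-1}w(r)$.
Hence $w'\le w/2$, and Gronwall gives $w\equiv 0$, contradicting $w(1)=M_1+M_2$ for every choice of data. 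Dead cores require, roughly, $\gamma_1+\gamma_2$ to lie below the exponent governing $\mathcal L_{p,q}$ at small gradients. Even your heuristic compares $M$ with $M^{(\gamma_1+\gamma_2)/(p^*-1)}$, i.e.\ $\gamma_1+\gamma_2$ with $p^*-1$, not $\gamma_i$ with $1$. The paper's proof does not supply this part either: Proposition~\ref{prop:uniform-Lip} starts from a zero $u(r_u)=0$, so there is nothing to borrow.

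Finally, for $\{u=0\}=\{v=0\}$ with $\gamma_1>0$ you again invoke comparability, but it is unnecessary. Where $u=0<v$, the limiting $v$-equation has zero right-hand side. So $r^{n-1}\Psi_v$ is constant there and equals $0$ at $r_v$, a $C^1$ minimum of $v$, which forces $v\equiv 0$; this is the mechanism of Case b in Corollary~\ref{cor:cont}. The case $\gamma_1<0$ is the blow-up argument of Case a, and it needs the uniform flux bound that Step 3 does not yet deliver.
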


\begin{remark}
That the supports of both components should coincide sounds reasonable in the present regime.
Indeed, if one component were to vanish while the other remained positive, then the singular structure in \textup{(P2)} would force the corresponding right-hand side to become incompatible with a separated positivity region.
This mechanism is specific to singular couplings of negative power type and may fail for smooth or weakly coupled source terms. 

This may, potentially, fail in general 
(non-symmetric) settings, see \cite{El-Je-Sh-2}.

\end{remark}

\section{Proof of  Theorem \ref{thm:interior-pq-variable}}

\subsection{Iterative scheme for an approximated problem}

Since the right-hand side may exhibit singularities or degeneracies, we approximate it with functions that are neither singular nor degenerate.
We thus  fix a parameter
\[
0<\varepsilon\ll 1,
\]
and define the approximate  nonlinearities
\begin{equation}\label{eq:epsilon-approx}
    h_\varepsilon(r,u,v):=h\big(r,\max\{u,\varepsilon\},\max\{v,\varepsilon\}\big),
\qquad 
k_\varepsilon(r,u,v):=k\big(r,\max\{u,\varepsilon\},\max\{v,\varepsilon\}\big).
\end{equation}
By construction, for $u \approx 0$ and/or  $v\approx 0$  we have
\[
c_0 \le h_\varepsilon,\; k_\varepsilon \le c_1,
\]
and therefore the approximated problem belongs to the class of standard obstacle problems, albeit with certain non-standard features.

We shall now apply an iterative scheme in which, at each step, we solve a scalar problem whose coefficients and data are inherited from the previous iteration.
We initialize the iteration by 
\[
u_0^\varepsilon\equiv M_1,
\qquad
v_0^\varepsilon\equiv M_2,
\]
and construct inductively a sequence $\{(u_i^\varepsilon,v_i^\varepsilon)\}_{i\ge1}$.

For $i\ge1$, assume that $(u_{i-1}^\varepsilon,v_{i-1}^\varepsilon)$ is already known, and define the  exponents
\[
s_i^w(r):=s\bigl(r,w_{i-1}^\varepsilon(r),|(w_{i-1}^\varepsilon)'(r)|\bigr),
\qquad
\textrm{for} ~s\in\{p,q\},\quad w\in\{u,v\}.
\]
By \textup{(P1)}, for each $i\ge1$, $s_i^w(r)$ is a measurable function satisfying $1 < s_i^w(r) < \infty.
$

Next, we  define 
\[
0 \leq f_i^\varepsilon(r):=
h_\varepsilon\bigl(r,u_{i-1}^\varepsilon(r),v_{i-1}^\varepsilon(r)\bigr) \in L^\infty((0,1)),
\]
and consider the scalar obstacle-type problem, as a minimizer of the functional  
\begin{equation}
   \mathcal J_i^\varepsilon(w)
:=
\int_{0}^1
\left(
\frac{1}{p_i^u(r)}| w'(r)|^{p_i^u(r)}
+
\frac{1}{q_i^u(r)}| w'(r)|^{q_i^u(r)}+f_i^\varepsilon(r)\,w(r)
\right)\,r^{n-1}dr,
\end{equation} 
on the associated admissible class
\begin{equation}
\mathcal K_i^{u,\varepsilon}
:=
\Bigl\{
w\in \textbf{W}^{u,i},\
w\ge0 \text{ a.e. in }(0,1),\ \ w(1)=M_1
\Bigr\},
\end{equation}
where \[\textbf{W}^{u,i}:=W^{1,p_i^u(\cdot)}\bigl((0,1);r^{n-1}dr\bigr)
\cap
W^{1,q_i^u(\cdot)}\bigl((0,1);r^{n-1}dr\bigr).\]

\medskip

\noindent

For notational simplicity, we suppress the dependence of the functional on functions from the previous iterations and the corresponding coefficients. Indeed, at each fixed iteration step, these quantities are regarded as prescribed coefficients. The variational problem is therefore understood as a minimization problem only with respect to the variable $w$.

Note that in the radial class, the one-dimensional interval $(0, 1)$  in \eqref{eq:main-system-radial} should be 
understood as the radial reduction of the ball $B_1 \subset \mathbb{R}^n$, 
rather than as an independent one-dimensional boundary value problem. 
In particular, no Dirichlet condition is prescribed at $r=0$. Instead, 
the regularity at the origin implies the natural symmetry condition $w'(0)=0$. 
Nevertheless, since we are mainly interested in proving the existence of a 
free boundary problem—which implies that the set $\{(u,v)=(0,0)\}$ is 
non-empty and a ball centered at the origin (due to expected symmetry)—we 
will not face any technical problems regarding the behavior of our solution 
pair at the origin, or in spherical coordinates at $r=0$.

\medskip

\noindent

To proceed, we  define $u_i^\varepsilon$ as the   unique minimizer of $\mathcal J_i^\varepsilon$ over
$\mathcal K_i^{u,\varepsilon}$, that is,
\[
\mathcal J_i^\varepsilon(u_i^\varepsilon)
=
\min_{w\in \mathcal K_i^{u,\varepsilon}}
\mathcal J_i^\varepsilon(w).
\]

The direct method of the calculus of variations yields the existence of a unique minimizer, since the
admissible set is nonempty, closed, and convex, while the functional is coercive and weakly lower semicontinuous.
Moreover, the strict convexity of the first integral implies uniqueness.

Note that uniqueness is essential here. It ensures that every step in our iteration is well-defined, allowing us to safely use the current solution to build the next one.

Once $u_i^\varepsilon$ is obtained, we define 
\[
g_i^\varepsilon(r):=
k_\varepsilon\bigl(r,u_i^\varepsilon(r),v_{i-1}^\varepsilon(r)\bigr),
\]
and let $v_i^\varepsilon$ be the unique minimizer of
\begin{equation}
    \mathcal I_i^\varepsilon(z)
:=
\int_{0}^1
\left(
\frac{1}{p_i^v(r)}| z'(r)|^{p_i^v(r)}
+
\frac{1}{q_i^v(r)}| z'(r)|^{q_i^v(r)}+g_i^\varepsilon(r)\,z(r)
\right)\,r^{n-1}dr
\end{equation}
over admissible class $\mathcal K_i^{v,\varepsilon}\subset\textbf{W}^{v,i}$ as well.

In this setting, the minimizers satisfy the weighted variational inequalities
\begin{equation}\label{eq:variational-radial-ui-compact}
\int_0^1
\mathcal A_i^u\bigl(r,(u_i^\varepsilon)'(r)\bigr)
(\phi'(r)-(u_i^\varepsilon)'(r))\,r^{n-1}\,dr
+
\int_0^1
f_i^\varepsilon(r)\bigl(\phi(r)-u_i^\varepsilon(r)\bigr)\,r^{n-1}\,dr
\ge 0
\end{equation}
for all $\phi\in \mathcal K_i^{u,\varepsilon}$, and
\begin{equation}\label{eq:variational-radial-vi-compact}
\int_0^1
\mathcal A_i^v\bigl(r,(v_i^\varepsilon)'(r)\bigr)
(\psi'(r)-(v_i^\varepsilon)'(r))\,r^{n-1}\,dr
+
\int_0^1
g_i^\varepsilon(r)\bigl(\psi(r)-v_i^\varepsilon(r)\bigr)\,r^{n-1}\,dr
\ge 0
\end{equation}
for all $\psi\in \mathcal K_i^{v,\varepsilon}$, where the multi-phase operators are defined by
\[
\mathcal A_i^u(r,\xi)
:=
|\xi|^{p_i^u(r)-2}\xi+|\xi|^{q_i^u(r)-2}\xi,
\qquad
\mathcal A_i^v(r,\xi)
:=
|\xi|^{p_i^v(r)-2}\xi+ |\xi|^{q_i^v(r)-2}\xi.
\]
Take into account that the admissible classes are convex, and so \eqref{eq:variational-radial-ui-compact} and \eqref{eq:variational-radial-vi-compact}
follow directly from the first variation of the corresponding convex
functional. Indeed, if $u_i^\varepsilon$ be the minimizer of
$\mathcal J_i^\varepsilon$ over $\mathcal K_i^{u,\varepsilon}$, for
$\phi\in \mathcal K_i^{u,\varepsilon}$ and $0<t<1$, the convexity of
$\mathcal K_i^{u,\varepsilon}$ implies
\(u_i^\varepsilon+t(\phi-u_i^\varepsilon)\in \mathcal K_i^{u,\varepsilon}.
\)
Hence the map
\[
t\mapsto 
\mathcal J_i^\varepsilon\bigl(u_i^\varepsilon+t(\phi-u_i^\varepsilon)\bigr)
\]
has a minimum at $t=0$ from the right. Therefore, its right derivative at
$t=0$ is nonnegative. Since the exponents
$p_i^u,q_i^u$ and the source $f_i^\varepsilon$ are functions of
$r$, differentiation under the integral sign gives \eqref{eq:variational-radial-ui-compact}. The same argument can be used for  $v_i^\varepsilon$.

Under the  assumption $p_* > 1$, $\textbf{W}^{u,i}$ is continuously embedded in $W^{1, p_*}((0,1); r^{n-1}dr)$, which  follows from the one-dimensional Sobolev embedding that $u_i^\varepsilon$ admits a continuous representative on $[0,1]$. The same holds for $v_i^\varepsilon$.

\medskip

\noindent

From the variational inequalities \eqref{eq:variational-radial-ui-compact}--\eqref{eq:variational-radial-vi-compact} (see also \eqref{eq:delta-measure-u}--\eqref{eq:delta-measure-v}), one deduces that $(u_i^\varepsilon)'\ge0$ and $(v_i^\varepsilon)'\ge0$ almost everywhere. Hence both functions are nondecreasing, and their positivity sets are intervals of the form
\begin{equation}\label{eq:interval}
\{r \in (0,1) : u_i^\varepsilon(r) > 0\} = (r_{u_i^\varepsilon}, 1), \qquad
\{r \in (0,1) : v_i^\varepsilon(r) > 0\} = (r_{v_i^\varepsilon}, 1).
\end{equation}

The same variational inequalities also yield the Euler--Lagrange equations satisfied by the minimizers on their positivity sets. However, it is not immediately clear how these equations extend across the free boundary, where the functions vanish, since the distributional derivatives may contain a point mass or even a more singular distribution. Such behavior may occur, for instance, if a minimizer behaves locally like $(r-r_0)_+^a$ with $a\le 1$ near a point $r_0$ at which it becomes zero.

\begin{lemma}\label{lem:delta-measure-radial}
For a fixed $i\ge1$ and $\varepsilon>0$, let $u_i^\varepsilon$ and $v_i^\varepsilon$ be the unique minimizers of $\mathcal J_i^\varepsilon$ and $\mathcal I_i^\varepsilon$ over their respective admissible classes. Then there exist unique contact radii $r_{u_{i}^\varepsilon}, r_{v_{i}^\varepsilon} \in [0,1)$ such that
\begin{equation}\label{eq:delta-measure-u}
\frac{d}{dr}\!\left(
r^{n-1}\mathcal A_i^u\bigl(r,(u_i^\varepsilon)'(r)\bigr)
\right)
=
r^{n-1}f_i^\varepsilon(r)\,\mathbf 1_{(r_{u_{i}^\varepsilon},1)} \qquad\text{in }\mathcal D'(0,1),
\end{equation}
and
\begin{equation}\label{eq:delta-measure-v}
\frac{d}{dr}\!\left(
r^{n-1}\mathcal A_i^v\bigl(r,(v_i^\varepsilon)'(r)\bigr)
\right)
=
r^{n-1}g_i^\varepsilon(r)\,\mathbf 1_{(r_{v_{i}^\varepsilon},1)} \qquad\text{in }\mathcal D'(0,1).
\end{equation}
\end{lemma}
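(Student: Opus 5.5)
We argue for $u_i^\varepsilon$; the argument for $v_i^\varepsilon$ is identical with $g_i^\varepsilon$, $\mathcal A_i^v$ in place of $f_i^\varepsilon$, $\mathcal A_i^u$. Write $u=u_i^\varepsilon$, $f=f_i^\varepsilon$, $\mathcal A=\mathcal A_i^u$, and set $\Phi(r):=r^{n-1}\mathcal A(r,u'(r))$. By the growth bounds in (P1), $\Phi\in L^1_{loc}(0,1)$. The proof has three steps: (a) on the positivity set the Euler--Lagrange equation holds as a distributional equation; (b) globally $\Phi'\ge0$ and $\Phi'\le r^{n-1}f$ in $\mathcal D'(0,1)$, so $\Phi$ is locally Lipschitz; (c) $\{u=0\}$ is an interval $[0,r_u]$, and $\Phi\equiv0$ there. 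The continuity of $\Phi$ across $r_u$ then removes any point mass.

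\textbf{Step (a).} By continuity of $u$, the set $\{u>0\}$ is open. For $\eta\in C_c^\infty(\{u>0\})$ and $|t|$ small, $u+t\eta\ge0$, so $\phi=u+t\eta$ is admissible in \eqref{eq:variational-radial-ui-compact} with either sign of $t$. This gives
\[
\int_0^1 \Phi\,\eta'\,dr+\int_0^1 r^{n-1}f\eta\,dr=0,
\qquad\text{that is,}\qquad \Phi'=r^{n-1}f \ \text{ in } \mathcal D'(\{u>0\}).
\]

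\textbf{Step (b).} For $\eta\in C_c^\infty(0,1)$ with $\eta\ge0$, the function $\phi=u+\eta$ is admissible. This yields $-\Phi'+r^{n-1}f\ge0$, i.e. $\Phi'\le r^{n-1}f$.

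For the reverse inequality, take $\phi=(u-t\eta)_+=u-t\eta+(t\eta-u)_+$. We expect the contribution of the set $\{u<t\eta\}$ to be handled by the standard Lewy--Stampacchia type argument. A simpler alternative is available in one dimension. Since $f\ge0$, the truncation $\min\{u,\ell\}$ and monotone rearrangements lower $\mathcal J_i^\varepsilon$. Together with uniqueness of the minimizer, this shows that $u$ is nondecreasing, which is the claim \eqref{eq:interval} already quoted, so $u'\ge0$. Hence $\Phi\ge0$.

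Next, on $\{u=0\}$, which is a closed interval $[0,r_u]$ by monotonicity, we have $u'=0$ a.e., so $\Phi=0$ a.e. there. On $(r_u,1)$, step (a) gives $\Phi(r)=\Phi(\rho)+\int_\rho^r s^{n-1}f\,ds$. Thus $\Phi$ is absolutely continuous on $(r_u,1)$ and has a limit $L\ge0$ as $r\downarrow r_u$. Consequently $\Phi'=L\,\delta_{r_u}+r^{n-1}f\mathbf 1_{(r_u,1)}$ in $\mathcal D'(0,1)$. Comparing with $\Phi'\le r^{n-1}f$ from the first part of this step, we get $L\,\delta_{r_u}\le0$, hence $L=0$. (If $r_u=0$ there is nothing to check, because the point $0$ lies outside $(0,1)$.) This proves \eqref{eq:delta-measure-u} with $r_{u_i^\varepsilon}:=r_u=\sup\{u=0\}$, or $r_u=0$ if $u>0$ on $(0,1)$. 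Uniqueness of the radius follows because the indicator set is determined by $u$ itself: by step (a) and $f\ge c_0>0$ near zero, $\Phi$ is strictly increasing on $(r_u,1)$, so the two sets cannot differ by more than a null set.

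\textbf{Main obstacle.} The delicate step is excluding the point mass $L\,\delta_{r_u}$, that is, ruling out a corner where $u'(r_u^+)>0$. Our route avoids test functions in $\{u<t\eta\}$ entirely. It uses only upward perturbations $\phi=u+\eta$, which are always admissible, together with monotonicity. The one point to verify is that $u'\ge0$, which we justify through the truncation-and-uniqueness argument above. If that justification turns out to be incomplete, we fall back on the penalization $\beta_\delta(u)$ and pass to the limit in the Lewy--Stampacchia bound $0\le \Phi'\le r^{n-1}f$.
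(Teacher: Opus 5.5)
Your proof is correct in substance, and it reaches the decisive inequality by a different route than the paper. Both arguments get the upper bound $\Phi'\le r^{n-1}f_i^\varepsilon$ from the competitors $u_i^\varepsilon+\eta$ with $\eta\ge0$; here, in your notation, $\Phi=r^{n-1}\mathcal A_i^u(r,(u_i^\varepsilon)')$.

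For the reverse direction the paper never uses one-dimensionality. It tests the Euler--Lagrange equation on $\{u_i^\varepsilon>0\}$ with the cutoff $\eta H_\delta(u_i^\varepsilon)$. It then drops the term $\frac1\delta\int_{\{\delta<u_i^\varepsilon<2\delta\}}\mathcal A_i^u(r,(u_i^\varepsilon)')(u_i^\varepsilon)'\eta\,r^{n-1}dr\ge0$ and lets $\delta\to0$. This Kinderlehrer--Stampacchia type argument uses only $\mathcal A_i^u(r,\xi)\xi\ge0$, and it works in any dimension and for an arbitrary contact set. Taken correctly, the left side of the limit tends to $-\int_{\{u_i^\varepsilon>0\}}f_i^\varepsilon\eta\,r^{n-1}dr$, and the argument gives the Lewy--Stampacchia bounds $r^{n-1}f_i^\varepsilon\mathbf 1_{\{u_i^\varepsilon>0\}}\le\Phi'\le r^{n-1}f_i^\varepsilon$. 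The interval structure \eqref{eq:interval} is then needed only to rewrite $\mathbf 1_{\{u_i^\varepsilon>0\}}$ as $\mathbf 1_{(r_{u_i^\varepsilon},1)}$.

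You instead establish monotonicity first. You then observe that the only possible defect is a single atom $L\,\delta_{r_u}$, with $L\ge0$ because $\Phi\ge0$ and $L\le0$ because of the upper bound. This is more elementary and never needs a test function that enters the contact set. It also fills a gap in the paper: the paper's proof invokes \eqref{eq:interval}, but \eqref{eq:interval} is justified there only by reference to the variational inequalities and to this very lemma. The price is that your argument is intrinsically one-dimensional and rests on uniqueness of the minimizer.

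Two points need tightening.

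First, drop ``monotone rearrangements''. The integrand depends on $r$ through $r^{n-1}$, $p_i^u(r)$, $q_i^u(r)$ and $f_i^\varepsilon(r)$, so a rearrangement need not lower $\mathcal J_i^\varepsilon$. Also, a global $\min\{u,\ell\}$ with $\ell<M_1$ violates $w(1)=M_1$. Use the running minimum $\tilde u(r):=\min_{[r,1]}u$ instead:
\begin{itemize}
\item it is admissible and $\tilde u\le u$;
\item $|\tilde u'|\le|u'|$ a.e., since $\tilde u$ is locally constant where $\tilde u<u$, and $\tilde u'=u'$ a.e. where the two agree.
\end{itemize}
Hence $\mathcal J_i^\varepsilon(\tilde u)\le\mathcal J_i^\varepsilon(u)$, and uniqueness gives $\tilde u=u$, by the same mechanism as in Lemma~\ref{lem:uniform-Linfty-radial}.

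Second, your uniqueness-of-radius remark only excludes radii $r'>r_u$: if $f_i^\varepsilon=0$ a.e. on $(r_u,r')$, then $L=0$ forces $u\equiv0$ there. A radius $r'<r_u$ is excluded only if $f_i^\varepsilon$ does not vanish a.e. on $(r',r_u)$. The bound ``$f\ge c_0$ near zero'' concerns small values of $u_{i-1}^\varepsilon,v_{i-1}^\varepsilon$, not $r$ near $r_u$, so it does not give this. Simply define $r_{u_i^\varepsilon}:=\sup\{u_i^\varepsilon=0\}$, which is all that is used later.
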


\begin{proof}
We present the proof for $u_i^\varepsilon$, as the proof for $v_i^\varepsilon$ follows identically. 
Let 
\[
\mu_{u_i^\varepsilon} :=
r^{n-1}f_i^\varepsilon
-
\frac{d}{dr}\!\left(
r^{n-1}\mathcal A_i^u\bigl(r,(u_i^\varepsilon)'(r)\bigr)
\right)
\quad \in D'(0,1),
\] 
be a distribution, defined by
\[
\langle \mu_{u_i^\varepsilon},\eta\rangle
:=
\int_0^1
\mathcal A_i^u\bigl(r,(u_i^\varepsilon)'(r)\bigr)\eta'(r)\,r^{n-1}\,dr
+
\int_0^1
f_i^\varepsilon(r)\eta(r)\,r^{n-1}\,dr
\]
for all $\eta\in C_0^\infty(0,1)$.

We first verify that $\mu_{u_i^\varepsilon}$ is a nonnegative Radon measure. 
Let $\eta\in C_0^\infty(0,1)$ with $\eta\ge0$. Since $u_i^\varepsilon\ge0$ a.e., the perturbed function
$\phi=u_i^\varepsilon+\eta $
remains a valid competitor in  $\mathcal K_i^{u,\varepsilon}$. Testing the variational inequality
\eqref{eq:variational-radial-ui-compact} with $\phi$ immediately gives
\[
\langle \mu_{u_i^\varepsilon},\eta\rangle\ge0
\qquad
\forall \eta\in C_0^\infty(0,1),\ \eta\ge0.
\]
Hence $\mu_{u_i^\varepsilon}$ is a positive distribution, and therefore a nonnegative Radon measure on $(0,1)$.

Since, in the interior of the support of the solution, one may perform
variations in both directions by considering
\[
\phi=u_i^\varepsilon\pm\eta,
\qquad
\eta\in C_0^\infty(\{u_i^\varepsilon>0\}),
\]
it follows that
\[
\mu_{u_i^\varepsilon}=0
\qquad\text{in}\qquad
\{u_i^\varepsilon>0\}\cup\{u_i^\varepsilon=0\}^{\circ}.
\]
Hence $\mu_{u_i^\varepsilon}$ is a nonnegative Radon measure supported on
$\partial\{u_i^\varepsilon>0\}.$

Let now $\eta\in C_0^\infty(0,1)$, $\eta\ge0$, and define
\[
\eta_\delta=\eta\,H_\delta,
\]
where
\[
H_\delta=
\begin{cases}
1, & \text{if } u_i^\varepsilon\ge 2\delta,\\[4pt]
\dfrac{u_i^\varepsilon}{\delta}-1,
& \text{if } \delta<u_i^\varepsilon<2\delta,\\[8pt]
0, & \text{if } u_i^\varepsilon\le \delta.
\end{cases}
\]

Then
\[
-\langle \eta_\delta,f_i^\varepsilon r^{n-1} \rangle
=
\int_{\{u_i^\varepsilon>0\}}
A_i^u\!\bigl(r,(u_i^\varepsilon)'\bigr)
\,\eta_\delta'(r) r^{n-1} \,dr.
\]

Since
\[
\eta_\delta'
=
H_\delta\,\eta'
+
\frac{1}{\delta}\,
\chi_{\{\delta<u_i^\varepsilon<2\delta\}}
\,(u_i^\varepsilon)'\eta,
\]
we obtain
\begin{eqnarray}
 & -\langle \eta_\delta,f_i^\varepsilon r^{n-1} \rangle
=&
\int_0^1
A_i^u\!\bigl(r,(u_i^\varepsilon)'(r)\bigr)
\,\eta'(r)\, 
H_\delta(r)\, r^{n-1} dr\nonumber\\ 
&&+
\frac1\delta
\int_{\{\delta<u_i^\varepsilon<2\delta\}}
A_i^u\!\bigl(r,(u_i^\varepsilon)'(r)\bigr)
\,(u_i^\varepsilon)'(r)
\,\eta(r)\, r^{n-1}dr.\nonumber
\end{eqnarray}

By the monotonicity assumption
$ A_i^u(r,\xi)\,\xi\ge0, $
the second term is nonnegative, and therefore
\[
-\langle \eta_\delta,f_i^\varepsilon r^{n-1} \rangle
\ge
\int_0^1
A_i^u\!\bigl(r,(u_i^\varepsilon)'\bigr)
\,\eta'\,
H_\delta\, r^{n-1} dr.
\]

Passing to the limit as $\delta\to0$, which is justified since
$0\le\eta_\delta\le\eta$ and
\[\int_0^1
\Bigl|
A_i^u\!\bigl(r,(u_i^\varepsilon)'\bigr)
\,\eta'
\Bigr|\, r^{n-1} dr<\infty,
\]
we obtain
\[
-\langle \eta,f_i^\varepsilon r^{n-1}\rangle
\ge
\int_{\{u_i^\varepsilon>0\}}
A_i^u\!\bigl(r,(u_i^\varepsilon)'\bigr)
\,\eta'\, r^{n-1}dr.
\]

By \eqref{eq:interval}, 
$(u_i^\varepsilon)'=0$  everywhere (besides possibly  at $r_{u^\varepsilon_i}$) the right-hand side may also be written as
\[
\int_0^1
A_i^u\!\bigl(r,(u_i^\varepsilon)'\bigr)
\,\eta'\, r^{n-1}dr.
\]
Therefore
\[
-\langle \eta,f_i^\varepsilon r^{n-1}\rangle \geq 
\int_0^1
A_i^u\!\bigl(r,(u_i^\varepsilon)'\bigr)
\,\eta'\, r^{n-1} dr,
\]
which implies
\[
\langle \eta, \mu_{u_i^\varepsilon}  \rangle = 
\int_0^1
A_i^u\!\bigl(r,(u_i^\varepsilon)' \bigr) 
\,\eta'\, r^{n-1}dr + 
\langle \eta,f_i^\varepsilon r^{n-1} \rangle \leq 0.
\]
Therefore, $\mu_{u_i^\varepsilon} $ is a nonpositive Radon measure. But since it was also a non-negative measure, we have $\mu_{u_i^\varepsilon} \equiv 0$. This implies that equation \eqref{eq:delta-measure-u} holds. 
A similar argument can be done for $v_i^\varepsilon$ to deduce \eqref{eq:delta-measure-v}.
\end{proof}

%%%%%%%%%%%%%%%%%%%%%%%%%%%%%%%%%%%%%%%%%%%%%%%
\subsection{Uniform  $C^{1,\alpha}$ bounds for $\varepsilon$-problem}

We first note that the monotonicity of $u_i^\varepsilon, v_i^\varepsilon$ implies they are bounded. However, we shall also present a simple variational proof of this bound in the next lemma.

\begin{lemma}\label{lem:uniform-Linfty-radial}
For every $i\ge1$ and every $\varepsilon>0$,
\[
0\le u_i^\varepsilon(r)\le M_1,
\qquad
0\le v_i^\varepsilon(r)\le M_2
\qquad\text{for a.e. }r\in[0,1].
\]
\end{lemma}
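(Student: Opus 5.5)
The plan is a truncation (comparison) argument in the variational setting, using the minimality and uniqueness of $u_i^\varepsilon$ and $v_i^\varepsilon$. The lower bounds $u_i^\varepsilon\ge0$ and $v_i^\varepsilon\ge0$ hold by construction, since the admissible classes $\mathcal K_i^{u,\varepsilon}$ and $\mathcal K_i^{v,\varepsilon}$ impose nonnegativity a.e. So only the upper bounds need proof. I give the argument for $u_i^\varepsilon$; the one for $v_i^\varepsilon$ is the same with $\mathcal I_i^\varepsilon$, $g_i^\varepsilon$ and $M_2$. The induction on $i$ is not actually needed, because the bound uses only $f_i^\varepsilon\ge0$, and this holds for every $i$ since $h,k\ge0$.

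Fix $i$ and set $w:=\min\{u_i^\varepsilon,M_1\}$. I first check that $w\in\mathcal K_i^{u,\varepsilon}$.
\begin{itemize}
\item $w\ge0$ a.e. and $w(1)=M_1$, because $u_i^\varepsilon(1)=M_1$ and the continuous representative is used.
\item $w'=(u_i^\varepsilon)'\mathbf 1_{\{u_i^\varepsilon<M_1\}}$ a.e., by the chain rule for Lipschitz truncations in one dimension.
\item Consequently $|w'|\le|(u_i^\varepsilon)'|$ pointwise, so $w$ lies in both variable-exponent Sobolev spaces, since the modular is monotone in $|w'|$.
\end{itemize}

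Next I compare energies. On the set $\{u_i^\varepsilon>M_1\}$, the gradient terms of $\mathcal J_i^\varepsilon(w)$ vanish, while those of $\mathcal J_i^\varepsilon(u_i^\varepsilon)$ are nonnegative. The potential term satisfies $f_i^\varepsilon w\le f_i^\varepsilon u_i^\varepsilon$ there, because $f_i^\varepsilon\ge0$. Off this set the two integrands coincide. Hence $\mathcal J_i^\varepsilon(w)\le\mathcal J_i^\varepsilon(u_i^\varepsilon)$. Since $u_i^\varepsilon$ is the unique minimizer, $w=u_i^\varepsilon$, and therefore $u_i^\varepsilon\le M_1$.

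As a cross-check, the monotonicity noted before the lemma gives the bound directly: $u_i^\varepsilon$ is nondecreasing with $u_i^\varepsilon(1)=M_1$, so $u_i^\varepsilon\le M_1$ on $[0,1]$.

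The main point needing care is applying the truncation chain rule in the weighted variable-exponent space $W^{1,p_i^u(\cdot)}\cap W^{1,q_i^u(\cdot)}$ with weight $r^{n-1}dr$. On every interval $(\delta,1)$ the weight is bounded above and below, so this is the classical one-dimensional $W^{1,1}_{\rm loc}$ chain rule. Membership in the space then follows from the pointwise inequality $|w'|\le|(u_i^\varepsilon)'|$.
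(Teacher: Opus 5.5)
Your proposal is correct and follows essentially the same route as the paper. Both arguments truncate to $\min\{u_i^\varepsilon,M_1\}$, check that it is admissible, compare energies using $|w'|\le|(u_i^\varepsilon)'|$ and $f_i^\varepsilon\ge0$, and conclude from uniqueness of the minimizer. Your added details (the one-dimensional chain rule away from $r=0$ and the monotonicity cross-check, which the paper also mentions just before the lemma) are consistent with its argument.
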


\begin{proof} 
Since $u_i^\varepsilon\in \mathcal K_i^{u,\varepsilon}$, we already have
$
u_i^\varepsilon\ge0
$ a.e. in $(0,1).$ Define
\[
\widetilde u_i^\varepsilon(r):=\min\{u_i^\varepsilon(r),M_1\}.
\]
Then $\widetilde u_i^\varepsilon\in \mathcal K_i^{u,\varepsilon}$, because
$\widetilde u_i^\varepsilon\ge0$ and
$\widetilde u_i^\varepsilon(1)=M_1$.

Moreover $
|(\widetilde u_i^\varepsilon)'(r)|\le |(u_i^\varepsilon)'(r)|
$ 
and since $f_i^\varepsilon\ge0$ one has
$
f_i^\varepsilon(r)\,\widetilde u_i^\varepsilon(r)
\le
f_i^\varepsilon(r)\,u_i^\varepsilon(r)
$. 
Therefore
$
\mathcal J_i^\varepsilon(\widetilde u_i^\varepsilon)
\le
\mathcal J_i^\varepsilon(u_i^\varepsilon).
$
Since $u_i^\varepsilon$ is a minimizer of $\mathcal J_i^\varepsilon$ over
$\mathcal K_i^{u,\varepsilon}$, it follows that
$\widetilde u_i^\varepsilon$ is also a minimizer. By uniqueness of the minimizer,
$u_i^\varepsilon=\widetilde u_i^\varepsilon$,
hence
$
u_i^\varepsilon(r)\le M_1
$ for a.e. $r\in(0,1).
$

The proof for $v_i^\varepsilon$ is identical, using the functional
$\mathcal I_i^\varepsilon$ and the truncation
$\widetilde v_i^\varepsilon:=\min\{v_i^\varepsilon,M_2\}$.
\end{proof}

\begin{remark}
Alternatively, the uniform upper bounds in Lemma \ref{lem:uniform-Linfty-radial} can be deduced via the maximum principle. Note that since the iterative source terms $f_i^\varepsilon$ and $g_i^\varepsilon$ are non-negative, the solutions act as subsolutions to the corresponding multi-phase operators. The bounds then follow directly from the comparison and maximum principles for $(p,q)$-Laplacian operators (see, e.g., \cite{FanZhao2001}).
For completeness, we give the above variational truncation argument, which is
self-contained and does not require invoking  maximum principle result.

\end{remark}

Next, we prove uniform local $C^{1,\alpha}$ bounds for the radial iterations.

\begin{proposition}\label{prop:C1a-radial}
Fix $\varepsilon>0$.
Then there exist $\alpha\in(0,1)$ and a constant $\mathbf{C}_{\varepsilon}>0$, independent of $i$, such that
\[
\|u_i^\varepsilon\|_{C^{1,\alpha}(0,1)}
+
\|v_i^\varepsilon\|_{C^{1,\alpha}(0,1)}
\le \mathbf{C}_{\varepsilon}
\qquad\text{for all } i\ge1.
\]
\end{proposition}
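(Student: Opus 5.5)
Since we are in the radial setting, the plan is to integrate the Euler--Lagrange equation of Lemma~\ref{lem:delta-measure-radial} explicitly rather than invoke general elliptic regularity. The key observation is that the distributional identity \eqref{eq:delta-measure-u} has a bounded right-hand side. Indeed, $0\le f_i^\varepsilon\le c_1(\varepsilon)$, where $c_1(\varepsilon)$ depends only on $\varepsilon$, $\Lambda_0$, $M_1,M_2$ and the growth bounds of $h$. This follows from \eqref{eq:hk-structure}, since $h_\varepsilon$ evaluates $h$ at arguments in $[\varepsilon,\max\{M_1,M_2\}]$ by Lemma~\ref{lem:uniform-Linfty-radial}. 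Consequently the flux
\[
\Phi_i(r):=r^{n-1}\mathcal A_i^u\bigl(r,(u_i^\varepsilon)'(r)\bigr)
\]
is in $W^{1,\infty}(0,1)$, so it has a Lipschitz representative. Its constant of integration is fixed by the regularity at the origin: $\Phi_i(0^+)=0$. Alternatively, one uses that $\Phi_i\equiv0$ on $(0,r_{u_i^\varepsilon})$, because $u_i^\varepsilon$ vanishes there. Either way,
\[
\Phi_i(r)=\int_0^r s^{n-1}f_i^\varepsilon(s)\mathbf 1_{(r_{u_i^\varepsilon},1)}(s)\,ds,
\qquad\text{so}\qquad
0\le \mathcal A_i^u\bigl(r,(u_i^\varepsilon)'(r)\bigr)\le \frac{c_1 r}{n}.
\]
This is consistent with the monotonicity $(u_i^\varepsilon)'\ge0$, and it gives the needed bound uniformly in $i$.

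For the Lipschitz bound, set $\xi=(u_i^\varepsilon)'\ge0$. Then $\mathcal A_i^u(r,\xi)=\xi^{p_i^u-1}+\xi^{q_i^u-1}\ge \xi^{p_*-1}$ when $\xi\ge1$. Hence $\xi\le\max\{1,(c_1/n)^{1/(p_*-1)}\}$, with no dependence on $i$. The same argument applies to $v_i^\varepsilon$, using $g_i^\varepsilon$ and the bound from \eqref{eq:hk-structure1}. Together with Lemma~\ref{lem:uniform-Linfty-radial}, this yields the uniform $C^{0,1}$ bound.

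For the Hölder continuity of the derivative, invert the map $\xi\mapsto\mathcal A_i^u(r,\xi)$ on $[0,\infty)$, which is strictly increasing for each fixed $r$. This gives
\[
(u_i^\varepsilon)'(r)=G_i\bigl(r,r^{1-n}\Phi_i(r)\bigr),
\qquad G_i(r,\cdot)=\mathcal A_i^u(r,\cdot)^{-1}.
\]
We then estimate $|(u_i^\varepsilon)'(r)-(u_i^\varepsilon)'(\rho)|$ by splitting it into two terms.

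The first is a \emph{flux-variation term}. The map $t\mapsto G_i(r,t)$ is Hölder continuous on bounded sets, since $\mathcal A_i^u(r,\xi)\ge\xi^{p^*-1}$ for $\xi\le1$. More precisely, the monotonicity $(\mathcal A(\xi_1)-\mathcal A(\xi_2))\ge c|\xi_1-\xi_2|^{\max(p^*,2)-1}$ on bounded sets gives a Hölder exponent $1/(\max\{p^*,2\}-1)$. Combined with the Lipschitz bound on $r^{1-n}\Phi_i$, which is $\le c_1$ from the integral formula, this controls the term.

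The second is an \emph{exponent-variation term}: the change of $G_i$ as $r$ varies with $t$ fixed.

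The main obstacle is this second term. Its control needs a modulus of continuity for $r\mapsto p_i^u(r)=p(r,u_{i-1}^\varepsilon(r),|(u_{i-1}^\varepsilon)'(r)|)$ that is uniform in $i$. Assumption (P1) gives Hölder dependence in $(r,s)$, but only continuity in the gradient variable $\xi$. So one must inductively propagate the estimate from step $i-1$.

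I would handle this with an induction on $i$. Assume $\|(u_{i-1}^\varepsilon)'\|_{C^{0,\beta}}\le \mathbf C_\varepsilon$; the case $i=1$ is trivial, since the initial data are constants. We then need the new bound to reproduce the same constant. If (P1) only gives continuity in $\xi$, the fallback is to prove equicontinuity of $(u_i^\varepsilon)'$ uniformly in $i$ via a uniform modulus of continuity on the compact set $[0,1]\times[0,M]\times[0,L]$, where $L$ is the Lipschitz bound above. Such a modulus suffices for the compactness arguments that follow. If instead $p$ and $q$ are assumed Hölder in all variables, a bootstrap closes with a reduced exponent $\alpha$. Where needed, I would also use that near $r=0$ both sides vanish ($u'\to0$), so no singular behavior arises at the origin.
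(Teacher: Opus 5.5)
The genuine gap is the exponent-variation term. That term is exactly where uniformity in $i$ is decided, and your proposal leaves it open. The earlier steps are sound, and somewhat more robust than the paper's. Integrating the flux from the origin with $\Phi_i(0^+)=0$ gives $0\le\mathcal A_i^u(r,(u_i^\varepsilon)'(r))\le c_1r/n$ and an $i$-independent Lipschitz bound even when $r_{u_i^\varepsilon}=0$. The paper instead integrates from some $\rho<r_{u_i^\varepsilon}$, derives \eqref{uid-decay}, and splits into cases near and away from the free boundary. Your flux-variation term is also fine.

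\textbf{Why the induction does not close.} Write $(u_i^\varepsilon)'(r)=F\bigl(r,u_{i-1}^\varepsilon(r),(u_{i-1}^\varepsilon)'(r),r^{1-n}\Phi_i(r)\bigr)$, where $F(r,s,\xi,\cdot)$ is the inverse of $\eta\mapsto\eta^{p(r,s,\xi)-1}+\eta^{q(r,s,\xi)-1}$. Each iteration composes three things: the modulus of continuity of $(u_{i-1}^\varepsilon)'$, the modulus of $p,q$ in $\xi$, and the H\"older exponent $\beta_0=1/(\max\{p^*,2\}-1)$ of the inverse.

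Suppose $p,q$ were even $\theta$-H\"older in $\xi$ and $[(u_{i-1}^\varepsilon)']_{C^{0,\beta}}\le K$. Your splitting then gives at best $(u_i^\varepsilon)'\in C^{0,\beta_0\theta\beta}$, with seminorm of order $(1+K^\theta)^{\beta_0}$. So the exponent decays geometrically to $0$ unless $\beta_0\theta=1$. Even when $\beta_0\theta=1$, the seminorms only satisfy $K_i\le C+C'K_{i-1}$. This gives an $i$-independent bound only under a smallness condition $C'<1$, which is not part of (P1).

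Hence the claim that ``a bootstrap closes with a reduced exponent'' is false. The equicontinuity fallback fails for the same reason: the modulus at step $i$ is an $i$-fold composition involving the modulus of $p,q$ in $\xi$. It would also prove less than the stated $C^{1,\alpha}$ bound. Without a contraction in the gradient variable, nothing stops the iterated pointwise map $\xi\mapsto F(r,s,\xi,t)$ from producing ever finer oscillations of $(u_i^\varepsilon)'$ in $r$.

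\textbf{The paper does not close this gap either.} In the interior subcase of its Case~2, it bounds $\bigl|\mathcal A_i^u(r,(u_i^\varepsilon)'(r))-\mathcal A_i^u(s,(u_i^\varepsilon)'(s))\bigr|\le C_\varepsilon|r-s|$. It then inverts ``the'' map $\xi\mapsto\xi^{p_i^u-1}+\xi^{q_i^u-1}$, which tacitly assumes $p_i^u(r)=p_i^u(s)$ and $q_i^u(r)=q_i^u(s)$. In effect it drops your exponent-variation term. Its claim that $(u_i^\varepsilon)'\sim d^\alpha$ is bounded below also rests only on the upper bound \eqref{uid-decay}.

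\textbf{What would close your argument.} Extra structure is needed, for example exponents independent of the gradient variable. Then $p_i^u(r)=p(r,u_{i-1}^\varepsilon(r))$ is $C^{0,\alpha_{\mathrm P}}$ with an $i$-independent constant, by (P1) and your uniform Lipschitz bound. Here $\alpha_{\mathrm P}$ denotes the H\"older exponent in (P1). Your two-term splitting then gives a uniform $C^{0,\beta_0\alpha_{\mathrm P}}$ bound on $(u_i^\varepsilon)'$.

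A minor point: \eqref{eq:hk-structure} is assumed only for small arguments. So $f_i^\varepsilon\le c_1(\varepsilon)$ needs boundedness of $h$ on $[0,1]\times[\varepsilon,M_1]\times[\varepsilon,M_2]$, which the paper also takes for granted.
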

\begin{proof}
We argue only for $u_i^\varepsilon$, the proof for $v_i^\varepsilon$ is identical.
We first observe from \eqref{eq:delta-measure-u} that the Euler--Lagrange equation satisfied by $u_i^\varepsilon$ can be written in radial form as
\[
\frac{d}{dr}\!\left(
r^{n-1}\mathcal A_i^u\bigl(r,(u_i^\varepsilon)'(r)\bigr)
\right)
=
r^{n-1} f_i^\varepsilon(r) \mathbf 1_{(r_{u_{i}^\varepsilon},1)}
\quad\text{in }
\mathcal D'(0,1). 
\] 

 Fix
$r\in(r_{u_i^\varepsilon},1)$, and choose
$\rho<r_{u_i^\varepsilon}$. Since
$u_i^\varepsilon\equiv0$ on $(0,r_{u_i^\varepsilon})$, we have
$(u_i^\varepsilon)'(\rho)=0$
, and so $
\rho^{n-1}
\mathcal A_i^u\bigl(\rho,(u_i^\varepsilon)'(\rho)\bigr)=0.
$

Integrating the radial equation from  $\rho$ to $r$, and using
$0\le f_i^\varepsilon\le \Lambda_\varepsilon$, we obtain
\begin{eqnarray}
   r^{n-1}\mathcal A_i^u\bigl(r,(u_i^\varepsilon)'(r)\bigr)
&=&
\int_{\rho}^{r}
s^{n-1} f_i^\varepsilon(s)
\mathbf 1_{(r_{u_i^\varepsilon},1)}(s)\,ds\nonumber\\
&\le&
\Lambda_\varepsilon
\int_{r_{u_i^\varepsilon}}^r s^{n-1}\,ds \le
\Lambda_\varepsilon r^{n-1}(r-r_{u_i^\varepsilon}),\nonumber
\end{eqnarray}
where in the last inequality we have used $s\le r$ for $s\in(r_{u_i^\varepsilon},r)$. It follows that 
\begin{equation}\label{AiuBound}
 \mathcal A_i^u\bigl(r,(u_i^\varepsilon)'(r)\bigr)
\le
\Lambda_\varepsilon (r-r_{u_i^\varepsilon}).   
\end{equation}

Using the definition of $\mathcal A_i^u$ and 
$(u_i^\varepsilon)'\ge0$, we have
\[
\mathcal A_i^u\bigl(r,(u_i^\varepsilon)'(r)\bigr)
=
\bigl((u_i^\varepsilon)'(r)\bigr)^{p_i^u(r)-1}
+
\bigl((u_i^\varepsilon)'(r)\bigr)^{q_i^u(r)-1}
\ge
\bigl((u_i^\varepsilon)'(r)\bigr)^{p_i^u(r)-1}.
\]
Combining this with \eqref{AiuBound}, we find
$\bigl((u_i^\varepsilon)'(r)\bigr)^{p_i^u(r)-1}
\le
\Lambda_\varepsilon (r-r_{u_i^\varepsilon}),
$
and hence
\[|(u_i^\varepsilon)'(r)|
\le
\Lambda_\varepsilon^{1/(p_i^u(r)-1)}
(r-r_{u_i^\varepsilon})^{1/(p_i^u(r)-1)}.
\]

Since $0<r-r_{u_i^\varepsilon}<1$, we have
$(r-r_{u_i^\varepsilon})^{1/(p_i^u(r)-1)}
\le
(r-r_{u_i^\varepsilon})^{1/(p^*-1)}.
$
Therefore, 
\begin{equation}\label{uid-decay}
|(u_i^\varepsilon)'(r)|
\le
C_\varepsilon
(r-r_{u_i^\varepsilon})^{\alpha}
\qquad
\text{for }r\in(r_{u_i^\varepsilon},1),
\end{equation}
where $C_\varepsilon:=
\max\left\{1,\Lambda_\varepsilon^{1/(p_*-1)}\right\}$ and $\alpha = 1/(p^* - 1)$.

To establish the full H\"{o}lder continuity of $(u_i^\varepsilon)'$, let $r, s \in (0,1)$ with $r > s$.

\medskip
\noindent
 \begin{itemize}
\item \underline{\textit{Case 1: $s \le r_{u_i^\varepsilon} < r$.}}

In this case, $(u_i^\varepsilon)'(s) = 0$. Since $r - r_{u_i^\varepsilon} \le r - s$, estimate \eqref{uid-decay} immediately yields
\[
|(u_i^\varepsilon)'(r) - (u_i^\varepsilon)'(s)| = |(u_i^\varepsilon)'(r)| \le C_\varepsilon (r - r_{u_i^\varepsilon})^\alpha \le C_\varepsilon |r - s|^\alpha.
\]

\medskip
\noindent
\item\underline{\textit{Case 2: $r_{u_i^\varepsilon} < s < r$.}}

Let $d := s - r_{u_i^\varepsilon} > 0$ be the distance from $s$ to the free boundary. We distinguish two subcases:

\noindent
For the case where $|r - s| \ge d/2$, 
using the triangle inequality and \eqref{uid-decay}
\[
|(u_i^\varepsilon)'(r) - (u_i^\varepsilon)'(s)| \le |(u_i^\varepsilon)'(r)| + |(u_i^\varepsilon)'(s)| \le C_\varepsilon \left( (r - r_{u_i^\varepsilon})^\alpha + d^\alpha \right).
\]
Since $r - r_{u_i^\varepsilon} = d + |r - s| \le 3|r - s|$, we obtain
\[
|(u_i^\varepsilon)'(r) - (u_i^\varepsilon)'(s)| \le C_\varepsilon \left( 3^\alpha |r - s|^\alpha + 2^\alpha |r - s|^\alpha \right) \le \mathbf{C}_\varepsilon |r - s|^\alpha.
\]

\noindent
For the case where $|r - s| < d/2$, 
the function $u_i^\varepsilon$ is strictly positive on the interval $[s, r]$, and bounded away from the free boundary point $r_{u_i^\varepsilon}$. Integrating \eqref{eq:delta-measure-u} over $[s, r]$ gives
\[
|\mathcal A_i^u\bigl(r,(u_i^\varepsilon)'(r)\bigr)- \mathcal A_i^u\bigl(s,(u_i^\varepsilon)'(s)\bigr)| \le \frac{\Lambda_\varepsilon}{s^{n-1}} \int_s^r \tau^{n-1} d\tau \le C_\varepsilon |r - s|.
\]
Since $d \le \tau - r_{u_i^\varepsilon} \le 1$, the gradient $(u_i^\varepsilon)'(\tau) \sim d^\alpha$ is bounded away from zero. Applying the local $C^\alpha$ inverse continuity of the operator $\xi \mapsto \xi^{p_i^u-1} + \xi^{q_i^u-1}$ on this non-degenerate range converts the flux continuity into gradient continuity
\[
|(u_i^\varepsilon)'(r) - (u_i^\varepsilon)'(s)| \le C_\varepsilon |r - s|^\alpha.
\]
\end{itemize}

Combining all cases proves that $\frac{|(u_i^\varepsilon)'(r) - (u_i^\varepsilon)'(s)|}{|r - s|^\alpha} \le \mathbf{C}_\varepsilon$ for all $r, s \in (0,1)$, which completes the proof.
\end{proof}

The uniform $C^{1,\alpha}_{\mathrm{loc}}(0,1)$ bound established in Proposition~\ref{prop:C1a-radial} is strictly necessary to guarantee the convergence of the iterative sequence as $i \to \infty$. Indeed, because the variable exponents $p_i^u(r)$ and $q_i^u(r)$ depend nonlinearly on the state $u_{i-1}^\varepsilon$ and its gradient $|(u_{i-1}^\varepsilon)'|$, weak convergence in $W^{1,p_*}(0,1)$ alone is insufficient to identify the limiting operator. By the Arzel\`a--Ascoli Theorem, the equicontinuity of $\{(u_i^\varepsilon)', (v_i^\varepsilon)'\}$ in $C^\alpha_{\mathrm{loc}}(0,1)$ yields a subsequence (still denoted by $i$) such that
\[
(u_i^\varepsilon, (u_i^\varepsilon)') \longrightarrow (u^\varepsilon, (u^\varepsilon)'), \qquad (v_i^\varepsilon, (v_i^\varepsilon)') \longrightarrow (v^\varepsilon, (v^\varepsilon)')
\]
uniformly on compact subintervals of $(0,1)$. This strong $C^1_{\mathrm{loc}}$ convergence ensures that the exponent maps satisfy $p_i^u(r) \to p\left(r, u^\varepsilon(r), |(u^\varepsilon)'(r)|\right)$ and $q_i^u(r) \to q\left(r, u^\varepsilon(r), |(u^\varepsilon)'(r)|\right)$ pointwise, which allows us to pass to the limit inside $\mathcal{A}_i^u$ and conclude that the limit pair $(u^\varepsilon, v^\varepsilon)$ is a weak solution to the $\varepsilon$-regularized system.

\subsection{Uniform $\varepsilon$-independent $C^1$-smoothness}

We first prove that solutions to the $\varepsilon$-dependent problem are uniformly Lipschitz. 

\medskip

\noindent
In view of the mean-value theorem of Calculus, 
a direct consequence of this universal Lipschitz smoothness is that there exists $r_0\in(0,1)$,
independent of $\varepsilon$, such that
\[
u^\varepsilon>0,
\qquad
v^\varepsilon>0
\qquad \text{in } (r_0,1),
\]
or equivalently 
\[
\max\{r_{u^\varepsilon},r_{v^\varepsilon}\}\le r_0 ,
\]
where 
\[
r_{u^\varepsilon}:=\inf\{r\in(0,1):u^\varepsilon(r)>0\},
\qquad
r_{v^\varepsilon}:=\inf\{r\in(0,1):v^\varepsilon(r)>0\}.
\]

For notational convenience, throughout the proof of the lemma we suppress the dependence on the regularization parameter $\varepsilon$. Thus, we write.
\[
u:=u^\varepsilon,
\qquad
v:=v^\varepsilon,
\]
and similarly omit the superscript $\varepsilon$ from all associated quantities. All estimates obtained below are uniform with respect to $\varepsilon$.

On the positivity intervals and in the distributional sense, we have
\begin{equation}\label{flux}
   \bigl(r^{n-1}\Phi_{u}(r)\bigr)'
=
r^{n-1}h_\varepsilon(r,u,v),
\qquad
\bigl(r^{n-1}\Psi_{v}(r)\bigr)'
=
r^{n-1}k_\varepsilon(r,u,v), 
\end{equation}
where
\[
\Phi_{u}(r)
=
|u'(r)|^{p(r,u,|u'|)-2}u'(r)
+
|u'(r)|^{q(r,u,|u'|)-2}u'(r),
\]
and $\Psi_{v}$ is defined analogously.

From the structural assumptions \textup{(P2)} (see \eqref{eq:hk-structure}), whenever $u' \geq 0$  we have 
\begin{equation}\label{eq:bounds1}
C_1u' \leq (\Phi_{u})^s  \leq C_2 u',
\end{equation}
and similarly whenever $v' \geq 0$  we have 
\begin{equation}\label{eq:bounds2}
C_1v' \leq (\Psi_{v})^s  \leq C_2 v',
\end{equation}
where $s= 1/(p^* -1)$. Also using \eqref{eq:hk-structure1} we have 
\begin{equation}
\frac{\lambda_0}{\Lambda_0} \Phi_u(r) \le 
\Psi_v(r) \le \frac{\Lambda_0}{\lambda_0} \Phi_u(r),
\end{equation}
 provided $r_u=r_v$, see \eqref{eq:r=v}.

Since for fixed
$\varepsilon>0$, the non-negative functions $u,v\in C^{1,\alpha} (0,1)$  vanish at their respective free-boundary points, 
we must have 
$u'(r_{u})=0,
~
v'(r_{v})=0.
$
Consequently,
\begin{equation}\label{fluxatfree}
 \Phi_{u} (r_{u} )=0,\qquad \Psi_{v} (r_{v} )=0.   
\end{equation}
Moreover, the right-hand sides in \eqref{flux} are positive, and therefore the maps
\[
r\mapsto r^{n-1}\Phi_{u} (r),
\qquad
r\mapsto r^{n-1}\Psi_{v} (r)
\]
are nondecreasing on the corresponding positivity intervals.  Together with \eqref{fluxatfree},
this gives 
\begin{equation}
 \Phi_{u}\ge0
\quad\text{on }(r_{u},1),
\qquad
\Psi_{v}\ge0
\quad\text{on }(r_{v},1).   
\end{equation}
Since maps $\Phi_{u}$ and $\Psi_{v}$ vanish only when the derivative vanishes and preserve
the sign of the derivative, we obtain
\begin{equation}
 u'\ge 0 \quad\text{in }(r_{u} ,1),
\qquad
v'\ge0 \quad\text{in }(r_{v} ,1).   
\end{equation}

\begin{proposition}\label{prop:uniform-Lip}
Assume \textup{(P1)}--\textup{(P2)}. Then the solution pair  $(u^\varepsilon, v^\varepsilon)$ is uniformly Lipschitz. More precisely there is a constant $C $ (independent of $\varepsilon$) such that 
$$
0 \leq (u^\varepsilon)' \leq C, \qquad 0 \leq 
(v^\varepsilon)' \leq C.
$$
The constant $C$ may depend only on the structural data of the problem,
namely $p_*, p^*, \gamma_1, \gamma_2, M_1, M_2$, and the constants
appearing in \textup{(P2)}, but not on $\varepsilon$. 
\end{proposition}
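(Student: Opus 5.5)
We plan to work with the flux formulation \eqref{flux} on the positivity interval and integrate it from the free boundary point, using \eqref{fluxatfree}. Nonnegativity of the derivatives, $(u^\varepsilon)'\ge0$ and $(v^\varepsilon)'\ge0$, was already obtained above: $r^{n-1}\Phi_u$ is nondecreasing and vanishes at $r_u$, and $\Phi_u$ has the same sign as $u'$. So only the upper bound needs proof. Integrating \eqref{flux} from $r_u$ to $r$ gives
\[
r^{n-1}\Phi_u(r)=\int_{r_u}^r s^{n-1}h_\varepsilon(s,u,v)\,ds,
\]
and the same identity holds for $\Psi_v$. If the right-hand side is bounded by a constant $K$ independent of $\varepsilon$, then $\Phi_u\le K r^{1-n}\int_{r_u}^r s^{n-1}\,ds\le K$. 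Since $\Phi_u\ge (u')^{p-1}$, this yields $u'\le \max\{1,K^{1/(p_*-1)}\}$, which is the desired Lipschitz bound.

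The key step is therefore an $\varepsilon$-independent bound on $\int_{r_u}^r s^{n-1}h_\varepsilon\,ds$. This is where the regimes of (P2) matter.

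\textbf{Regular regime.} When $\gamma_1,\gamma_2\ge0$, we have $\max\{u,\varepsilon\}\le \max\{M_1,1\}$ by Lemma~\ref{lem:uniform-Linfty-radial}. Hence $h_\varepsilon\le \Lambda_0\max\{M_1,1\}^{\gamma_1}\max\{M_2,1\}^{\gamma_2}$, possibly after using continuity of $h$ on the compact range where the small-value structure \eqref{eq:hk-structure} does not apply. This directly gives the bound.

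\textbf{Singular regimes.} Here the factor $\max\{u,\varepsilon\}^{\gamma_1}$ with $\gamma_1<0$ can be as large as $\varepsilon^{\gamma_1}$, so a naive bound fails. We plan a self-improving ODE comparison, run jointly for the two components. Set $m(r):=\max\{\Phi_u(r),\Psi_v(r)\}$, or work with the sum. Since $u(r)=\int_{r_u}^r u'$ and $u'\sim \Phi_u^{1/(p-1)}$, the small solution values are controlled from below by the flux: near the free boundary $u$ is roughly $\int \Phi_u^{s}$. The equation then reads, schematically,
\[
(r^{n-1}\Phi_u)'\lesssim u^{\gamma_1}v^{\gamma_2}.
\]
We multiply by $u'$, or alternatively use $\Phi_u$ as an integrating factor, in the spirit of a first integral. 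Using \eqref{eq:bounds1}--\eqref{eq:bounds2} and the comparability \eqref{eq:hk-structure1} of $\Phi_u$ and $\Psi_v$ (which needs $r_u=r_v$, to be established or assumed as in \eqref{eq:r=v}), we hope to reach a differential inequality of the form
\[
\frac{d}{dr}\,\Phi^{\,1+s}\lesssim \frac{d}{dr}\,U^{1+\gamma_1+\gamma_2},
\]
where $U\approx u\approx v$. The condition $\gamma_1+\gamma_2>-1$ is exactly what makes the exponent $1+\gamma_1+\gamma_2$ positive. Integrating from $r_u$, where both sides vanish, bounds $\Phi$ by a power of $\max\{M_1,M_2\}$, independently of $\varepsilon$. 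In the semi-singular regime the same computation applies, with the positive power bounded trivially by the $L^\infty$ bound.

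\textbf{Two cautions.} First, the factor $r^{n-1}$ and the variable exponents spoil the exact first integral. We would absorb the term $\frac{n-1}{r}\Phi_u$, which has a favorable sign since $\Phi_u\ge0$ and it can be dropped when estimating from above, and we would use $p_*,p^*$ to pass between $\Phi_u$ and $u'$ with the constants in \eqref{eq:bounds1}. Second, away from the region where $u,v$ are small, (P2) gives no structure. There we use boundedness of $h,k$ on compact sets in which $u,v$ are bounded below, and lower bounds on $u$, $v$ away from $r_u$ come from monotonicity.

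The main obstacle will be this singular-regime first-integral estimate: relating $u$ and $v$ pointwise near a common free boundary so that the product $u^{\gamma_1}v^{\gamma_2}$ is genuinely controlled by a single function $U^{\gamma_1+\gamma_2}$. This needs $u\approx v$ up to constants, which we would try to extract from \eqref{eq:hk-structure1} via $\Phi_u\approx\Psi_v$ and integration from $r_u=r_v$.
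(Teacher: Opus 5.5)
Your reduction to an $\varepsilon$-independent bound on $\int_{r_u}^r s^{n-1}h_\varepsilon\,ds$ is fine, and so is your regular-regime argument. The gap is in the singular regimes, which are the real content of the proposition.

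\textbf{Where your argument fails.} Your first integral $\frac{d}{dr}\Phi^{1+s}\lesssim\frac{d}{dr}U^{1+\gamma_1+\gamma_2}$ needs $u^\varepsilon\approx v^\varepsilon$ pointwise, with $r_{u^\varepsilon}=r_{v^\varepsilon}$. The common free boundary is also what makes both sides vanish at your starting point $r_u$. Neither is available.
\begin{itemize}
\item For fixed $\varepsilon$ the regularized sinks are bounded. The two components then behave like obstacle problems with different data $M_1,M_2$, and nothing forces their free boundaries to coincide. On an interval where $v^\varepsilon>0=u^\varepsilon$, the comparison $u\approx v$ is false, and $\max\{u,\varepsilon\}^{\gamma_1}=\varepsilon^{\gamma_1}$.
\item The identity $r_u=r_v$ in \eqref{eq:r=v} is proved only for the limit pair, in Corollary~\ref{cor:cont}. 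That proof uses the uniform bound $\Phi_{u^{\varepsilon_j}}\le C$ supplied by the present proposition, so invoking it here is circular.
\item Even on a common support, $\Phi_u\approx\Psi_v$ gives only power-type comparability of $u'$ and $v'$ when the exponents depend on state and gradient. Near the free boundary one gets roughly $u'\lesssim (v')^{(p_*-1)/(p^*-1)}$, not $u\approx v$.
\end{itemize}
Without such a comparison, $u^{\gamma_1}v^{\gamma_2}u'$ is not an exact derivative, and the first integral does not close. The same problem already appears in the semi-singular regime for the $u$-equation, whose sink carries $v^{\gamma_2}$ while the multiplier is $u'$.

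\textbf{How the paper avoids this.} The paper never compares $u$ with $v$.
\begin{itemize}
\item By the mean value theorem $u'$ exceeds $M_1$ somewhere, so there is a first point $t_u>r_u$ with $u'(t_u)=M_1$. On $(r_u,t_u]$ the bound $u'\le M_1$ is trivial.
\item The whole singular contribution near the free boundary is absorbed in $t_u^{n-1}\Phi_u(t_u)=\int_{r_u}^{t_u}\tau^{n-1}h_\varepsilon\,d\tau$. This quantity is controlled because $u'(t_u)=M_1$.
\item On $[t_u,1]$ the paper uses monotonicity of the flux to get $u'\ge c>0$, hence $d\tau\le C\,du$; likewise for $v$ on $[t_v,1]$.
\item H\"older's inequality with $a=2/(1-\gamma_1+\gamma_2)$ and $b=2/(1+\gamma_1-\gamma_2)$ then decouples the product:
\[
\int u^{\gamma_1}v^{\gamma_2}\,d\tau\le C\Bigl(\int u^{a\gamma_1}u'\,d\tau\Bigr)^{1/a}\Bigl(\int v^{b\gamma_2}v'\,d\tau\Bigr)^{1/b}.
\]
Both integrals are finite, because $a\gamma_1+1$ and $b\gamma_2+1$ are positive multiples of $1+\gamma_1+\gamma_2$.
\item On the intermediate interval between $t_u$ and $t_v$, the paper uses the pointwise sink comparison $h_\varepsilon\le(\Lambda_0/\lambda_0)k_\varepsilon$ together with the flux of the other component.
\end{itemize}
This is how the hypothesis $\gamma_1+\gamma_2>-1$ enters, rather than through a single comparison function $U$.
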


\begin{proof} 
We first prove uniform Lipschitz bounds for our solution pair.  Since
\[
u(r_u)=0,\qquad u(1)=M_1, 
\]
the mean value theorem yields a point $t_{*u}\in(r_u,1)$ such that
\[
u'(t_{*u})=\frac{M_1}{1-r_u}.
\]
Likewise, there exists $t_{*v}\in(r_v,1)$ satisfying
\(
v'(t_{*v})=\frac{M_2}{1-r_v}.
\)
Now define
\[
t_u:=\inf\{r<t_{*u}: u'(r)=M_1\},\qquad
t_v:=\inf\{r<t_{*v}: v'(r)=M_2\}.
\]
The existence of $t_u$ and $t_v$ follows from the continuity of $u'$ and $v'$ and the intermediate value theorem, since
\[
u'(r_u)=v'(r_v)=0,
\]
while
\[
0<M_1<\frac{M_1}{1-r_u},
\qquad
0<M_2<\frac{M_2}{1-r_v}.
\]
So by definitions of $t_u$ and $t_v$,  one has $v'(t_u) = M_1 > 0$ and $v'(t_v) = M_2 > 0$, which imply that $\Phi_u(t_u),\Psi_v(t_v) \ge c_0 > 0$ for a positive constant $c_0$ depending only on $M_1, M_2$ and structural data. Since both functions are non-decreasing on their positivity sets, it follows that $\Phi_u(r) \ge \Phi_u(t_u) \ge c_0$ for all $r \in [t_u, 1]$ and the same inequality for $\Psi_v$. Combining these lower bound with structural inequality  \eqref{eq:bounds2} we obtain 
\begin{equation}\label{1lephi}
   1 \le \frac{1}{c_0^s} \Phi_u(r)^s \le \frac{C_2}{c_0^s} u'(r),\qquad \text{for all } r \in [t_u, 1],
\end{equation}
and 
\begin{equation}\label{1lepsi}
   1 \le \frac{1}{c_0^s} \Psi_v(r)^s \le \frac{C_2}{c_0^s} v'(r),\qquad \text{for all } r \in [t_v, 1].
\end{equation}

Assume for definiteness that $t_u\le t_v$.
The opposite ordering is treated identically.

We distinguish the possible sign configurations of the exponents
$\gamma_1$ and $\gamma_2$.

\medskip

\noindent
\underline{\bf Case 1: $\gamma_j \geq 0$.} 

As remarked earlier, the case where both $\gamma_j = 0$ reduces the  system to a pair of uncoupled problems, thereby reducing the system to the classical scalar setting.
In any case, we have that 
\[
u^{\gamma_1}v^{\gamma_2}
\le
M_1^{\gamma_1}M_2^{\gamma_2}.
\]
Hence, by \textup{(P2)}  there exists a constant $C>0$,
independent of $\varepsilon$, such that
\[
0\le
h_\varepsilon(r,u,v),
\,
k_\varepsilon(r,u,v)
\le C
\]
throughout the positivity region.
Integrating the second identity in \eqref{flux} over $[t_v,r]$, we obtain
\[
r^{n-1}\Psi_v(r)
-
t_v^{\,n-1}\Psi_v(t_v)
=
\int_{t_v}^{r}
\tau^{n-1}
k_\varepsilon(\tau,u,v)\,d\tau.
\]
Using the above bound and the fact that $\tau^{n-1}\le1$, it follows that
\[
r^{n-1}\Psi_v(r)
-
t_v^{\,n-1}\Psi_v(t_v)
\le C(r-t_v)
\le C.
\]
Since $\Psi_v(t_v)$ is bounded by construction, we conclude that
\[
r^{n-1}\Psi_v(r)\le C
\qquad\text{for }r\in[t_v,1].
\]
As $r\ge t_v$, division by $r^{n-1}$ yields
\[
\Psi_v(r)\le C
\qquad\text{for }r\in[t_v,1],
\]
which implies 
$
v'(r)\le C$, using \eqref{eq:bounds2}.
Since $v'\le M_2$ on $(r_v,t_v)$ by the definition of $t_v$, taking a larger constant $C$, if it is needed, leads us to have
\[
v'(r)\le C
\qquad\text{for all }r\in(r_v,1).
\]

The same argument applies to $u$, and the conclusion follows in this case
\[
u'(r)\le C,
\qquad
v'(r)\le C, \qquad \forall \ r \in (0,1).
\]

\medskip

\noindent
\underline{\bf Case 2: $\gamma_1 \geq  0 > \gamma_2 > -1$.}

In this case, we have 
\[
u^{\gamma_1}v^{\gamma_2}
\le
M_1^{\gamma_1}v^{\gamma_2}.
\]

Combining this bound with structural inequality \eqref{1lepsi}, we obtain 
\[
\int_{t_v}^{r}
u^{\gamma_1}v^{\gamma_2}\,d\tau\le M_1^{\gamma_1} \int_{t_v}^{r} v^{\gamma_2} \, d\tau 
\le
\frac{M_1^{\gamma_1}C_2}{c_0^s}
\int_{t_v}^{r}
v'v^{\gamma_2}\,d\tau
=
\frac{M_1^{\gamma_1}C_2}{c_0^s(\gamma_2+1)}
\Bigl(
(v(r))^{\gamma_2+1}
-
(v(t_v))^{\gamma_2+1}
\Bigr).
\]
Since $\gamma_2>-1$, the right-hand side is
uniformly bounded. 
The subsequent estimates are identical to those given in Case 1.

\medskip

\noindent
\underline{\bf Case 3: $0 > \gamma_j $ and $\gamma_1 + \gamma_2 > -1$.}

Integrating \eqref{flux} for $v$ over $[t_v,r]$, with
$r\in (t_v,1)$, we obtain
\[
r^{n-1}\Psi_{v}(r)
-
t_v^{\,n-1}\Psi_{v}(t_v)
=
\int_{t_v}^{r}
\tau^{n-1}
k_\varepsilon
\bigl(
\tau,u(\tau),v(\tau)
\bigr)
\,d\tau .
\] 
By \textup{(P2)} equation \eqref{eq:hk-structure}, it follows that
\begin{equation}\label{befHo}
 r^{n-1}\Psi_{v}(r)
-
t_v^{\,n-1}\Psi_{v}(t_v)
\le
\Lambda_0 r^{n-1}
\int_{t_v}^{r}
u^{\gamma_1}
v^{\gamma_2}
\,d\tau .   
\end{equation}
First notice that due to the assumptions $\gamma_1 + \gamma_2 > -1$ and  $\gamma_j < 0$ we have 
\[
a = \frac{2}{1-\gamma_1+\gamma_2} > 1,
\qquad
b = \frac{2}{1+\gamma_1-\gamma_2} > 1, \qquad \frac{1}{a} + \frac{1}{b}=1.
\]
Applying Hölder's inequality with the conjugate exponents $a,b$ gives
\begin{equation}\label{eq:holder1}
    \int_{t_v}^{r}
u^{\gamma_1}v^{\gamma_2}\,d\tau
\le
\left(
\int_{t_v}^{r}
u^{a\gamma_1}\,d\tau
\right)^{1/a}
\left(
\int_{t_v}^{r}
v^{b\gamma_2}\,d\tau
\right)^{1/b}.
\end{equation}
On $[t_v,1]$, by applying inequalities \eqref{1lephi} and \eqref{1lepsi}, one gets
\[
\int_{t_v}^{r}
u^{a\gamma_1}\,d\tau
\le
C
\int_{t_v}^{r}
u'u^{a\gamma_1}\,d\tau,
\qquad
\int_{t_v}^{r}
v^{b\gamma_2}\,d\tau
\le
C
\int_{t_v}^{r}
v'v^{b\gamma_2}\,d\tau,
\]for positive constant $C$ depending on  $M_1, M_2$ and structural data.
Combining these estimates with \eqref{befHo}-\eqref{eq:holder1}, we obtain
\[
r^{n-1}\Psi_{v}(r)
-
t_v^{\,n-1}\Psi_{v}(t_v)
\le
Cr^{n-1}
\left(
\int_{t_v}^{r}
u'u^{a\gamma_1}\,d\tau
\right)^{1/a}
\left(
\int_{t_v}^{r}
v'v^{b\gamma_2}\,d\tau
\right)^{1/b}.
\]
Direct computation of the integrals, which are valid in view of the conditions $a\gamma_1+1>0$ and $b\gamma_2+1>0$, leads to
\[
r^{n-1}\Psi_{v}(r)
-
t_v^{\,n-1}\Psi_{v}(t_v)
\le
Cr^{n-1}
\left(u(r)^{a\gamma_1 + 1}\right)^{1/a}
\left(v(r)^{b\gamma_2 + 1}
\right)^{1/b}  \leq C .
\]

Since $\Psi_{v}(t_v)$ is bounded by the choice of $t_v$, we
conclude that
\[
\Psi_{v}(r)\le C
\qquad\text{for all }r\in[t_v,1].
\]
Finally, since the map
$z\longmapsto
z^{p(r,v,z)-1}
+
z^{q(r,v,z)-1}
$
is increasing on $[0,\infty)$, the boundedness of
$\Psi_v$ implies that
$v'(r)\le C
$ for all $r\in[t_v,1].
$
On the other hand, by definition of $t_v$, we have $v'\le M_2$ on $(r_{v} ,t_v)$.
This gives the uniform,   $\varepsilon$-independent bound 
\begin{equation}
    v'(r)\le C
\qquad\text{for all }r\in(0 ,1).
\end{equation}

We now derive the corresponding estimate for $u'$. Since we are in the case
$t_u\le t_v$, we first consider $r\in(t_u,t_v)$. Integrating the first
identity in \eqref{flux} over $[t_u,r]$, we obtain
\[
r^{n-1}\Phi_u(r)-t_u^{\,n-1}\Phi_u(t_u)
=
\int_{t_u}^{r}
\tau^{n-1}
h_\varepsilon(\tau,u(\tau),v(\tau))\,d\tau  \le
C
\int_{t_u}^{r}
k_\varepsilon(\tau,u(\tau),v(\tau))\,d\tau ,
\] 
where we have used the comparison of the two singular terms, as in  \textup{(P2)} equation \eqref{eq:hk-structure}.

Note that for $u=u^\varepsilon = \max\{u, \varepsilon\}$ and $v=v^\varepsilon = \max\{v, \varepsilon\}$, \eqref{eq:epsilon-approx} yields $h_\varepsilon(r, u^\varepsilon, v^\varepsilon) = h(r, u^\varepsilon, v^\varepsilon)$ and $k_\varepsilon(r, u^\varepsilon, v^\varepsilon) = k(r, u^\varepsilon, v^\varepsilon)$. By \textup{(P2)}, both regularized source terms satisfy $\lambda_0 (u^\varepsilon)^{\gamma_1} (v^\varepsilon)^{\gamma_2} \le h_\varepsilon, k_\varepsilon \le \Lambda_0 (u^\varepsilon)^{\gamma_1} (v^\varepsilon)^{\gamma_2}$, which implies $h_\varepsilon(r, u^\varepsilon, v^\varepsilon) \le \frac{\Lambda_0}{\lambda_0} k_\varepsilon(r, u^\varepsilon, v^\varepsilon)$ with constant $C = \Lambda_0/\lambda_0$ independent of $\varepsilon$.

Using the second identity in \eqref{flux} and that  $t_u \leq t_v$, we  obtain
\[
r^{n-1}\Phi_u(r)-t_u^{\,n-1}\Phi_u(t_u)
\le
C\Big(
t_v^{\,n-1}\Psi_v(t_v)
-
t_u^{\,n-1}\Psi_v(t_u)
\Big)
\le C t_v^{\,n-1}.
\]
Moreover, by the choice of $t_u$, the quantity $\Phi_u(t_u)$ is bounded
by a structural constant. Hence
\[
r^{n-1}\Phi_u(r)\le Ct_v^{\,n-1}
\qquad\text{for }r\in(t_u,t_v).
\]
Therefore, after division by $r^{n-1}$ on the relevant interval, we get
$\Phi_u(r)\le C
$ for $r\in(t_u,t_v).
$
Obviously, this implies 
$u'(r)\le C
$ in this interval.

It remains to treat $r\in[t_v,1]$. Integrating the first identity in
\eqref{flux} over $[t_v,r]$, we find
\[
r^{n-1}\Phi_u(r)-t_v^{\,n-1}\Phi_u(t_v)
=
\int_{t_v}^{r}
\tau^{n-1}
h_\varepsilon(\tau,u(\tau),v(\tau))\,d\tau .
\]
We can now derive a  similar argument as we did for the $v$ component, using \textup{(P2)} equation \eqref{eq:hk-structure}, Hölder's inequality with the conjugate exponents $a,b>1$, and the bounds
$a\gamma_1+1>0$, $b\gamma_2+1>0$, to arrive at 
\[
r^{n-1}\Phi_u(r)-t_v^{\,n-1}\Phi_u(t_v)
=
\int_{t_v}^{r}
\tau^{n-1}
h_\varepsilon(\tau,u(\tau),v(\tau))\,d\tau \leq 
C r^{n-1} \int_{t_v}^{r}
u^{\gamma_1}v^{\gamma_2}\,d\tau
\le r^{n-1} C.
\]
Consequently,
\[
r^{n-1}\Phi_u(r)-t_v^{\,n-1}\Phi_u(t_v)
\le Cr^{n-1}.
\]
Since the estimate on $(t_u,t_v)$ gives $\Phi_u(t_v)\le C$, it follows that
\[
\Phi_u(r)\le C
\qquad\text{for }r\in[t_v,1].
\]
Again, by the strict monotonicity of the structural map, we conclude that
\[
u'(r)\le C
\qquad\text{for }r\in[t_v,1].
\]
On the other hand, by the definition of $t_u$, we have
$u'\le M_1$ on $(r_u,t_u]$. Combining the estimates on
$(r_u,t_u]$, $(t_u,t_v)$, and $[t_v,1]$, there exists a constant
$C_2>0$, independent of $\varepsilon$, such that
\begin{equation}
   u'(r)\le C_2
\qquad\text{for all }r\in(r_u,1). 
\end{equation}

This concludes the proof of the proposition.

\end{proof}

\begin{corollary}\label{cor:cont} 
Assuming the hypotheses of Proposition \ref{prop:uniform-Lip} hold. Then the following statements
are true
\begin{enumerate}

\item There exists a sequence $\varepsilon_j\downarrow 0$ such that
the limit pair
$$(u,v):=\lim_{j\to\infty} (u^{\varepsilon_j}, v^{\varepsilon_j})$$
exists.

 \item The  limit pair  $(u, v)$ are $C^1(0,1)$.

\item If non of $\gamma_j $ is zero, then 
the supports of $u, v$ coincide, i.e., $r_u = r_v$, where
\[
r_u := \inf\{r \in (0,1) : u(r) > 0\}, \qquad r_v := \inf\{r \in (0,1) : v(r) > 0\}.
\] 

\end{enumerate}

\end{corollary}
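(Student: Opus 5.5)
For item (1), I will use Proposition~\ref{prop:uniform-Lip} together with Lemma~\ref{lem:uniform-Linfty-radial}. The family $\{(u^\varepsilon,v^\varepsilon)\}$ is bounded in $L^\infty$ by $(M_1,M_2)$, since the bounds pass from the iterates to the limit. It is also uniformly Lipschitz, with $0\le (u^\varepsilon)',(v^\varepsilon)'\le C$ and $C$ independent of $\varepsilon$. Arzel\`a--Ascoli then gives $\varepsilon_j\downarrow0$ with $(u^{\varepsilon_j},v^{\varepsilon_j})\to(u,v)$ uniformly on $[0,1]$. The limits are nonnegative, nondecreasing and Lipschitz, with $u(1)=M_1$ and $v(1)=M_2$. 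Along a further subsequence, $(u^{\varepsilon_j})'\rightharpoonup^* u'$ in $L^\infty$.

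For item (2), I will show that the fluxes $\Phi_{u^\varepsilon}$ and $\Psi_{v^\varepsilon}$ are equicontinuous, and then invert the flux map. From \eqref{flux}, on the positivity interval
\[
r^{n-1}\Phi_{u^\varepsilon}(r)=\int_{r_{u^\varepsilon}}^r \tau^{n-1}h_\varepsilon(\tau,u^\varepsilon,v^\varepsilon)\,d\tau .
\]
The increment of $r^{n-1}\Phi$ over $[s,r]$ is $\int_s^r \tau^{n-1}h_\varepsilon\,d\tau$. The key point is that this integral is uniformly small when $|r-s|$ is small. In the regular case it is immediate because $h_\varepsilon\le C$. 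In the singular cases I will reuse the H\"older splitting with exponents $a,b$ from Case~3 of Proposition~\ref{prop:uniform-Lip}. Then $u^{a\gamma_1}$ and $v^{b\gamma_2}$ have to be controlled in $L^1$ uniformly; I would try to get this from the lower bounds $u',v'\ge c$ beyond $t_u,t_v$, together with integrability of $t^{a\gamma_1}$ near $0$. This gives a uniform modulus $\omega(|r-s|)$ for $r^{n-1}\Phi$, for example $\omega(t)=Ct^{\theta}$ with $\theta=\min(1+a\gamma_1,\dots)/a$. Away from $r=0$ the factor $r^{n-1}$ can be removed. Since $z\mapsto z^{p-1}+z^{q-1}$ is strictly increasing with a uniformly continuous inverse on bounded sets, $(u^\varepsilon)'$ inherits a uniform modulus of continuity. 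For this I need the exponents $p,q$ to be continuous in their arguments, which (P1) provides. Arzel\`a--Ascoli then gives $C^1_{\rm loc}$ convergence, so $u,v\in C^1(0,1)$. I then pass to the limit in \eqref{flux} on $\{u>0\}$, where $h_\varepsilon\to h$ locally uniformly.

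For item (3), which I expect to be the main obstacle, I will argue by contradiction. Suppose $r_u<r_v$, so that $u>0$ while $v\equiv0$ on $(r_u,r_v)$. I will treat $\gamma_2<0$ first. On $(r_u,r_v)$ we have $v^{\varepsilon_j}\to0$ uniformly, so $\max\{v^{\varepsilon_j},\varepsilon_j\}\to0$. Since $u\ge c>0$ on a subinterval $I\Subset(r_u,r_v)$, the lower bound in \eqref{eq:hk-structure} gives $h_{\varepsilon_j}\ge\lambda_0 c^{\gamma_1}\max\{v^{\varepsilon_j},\varepsilon_j\}^{\gamma_2}\to\infty$ uniformly on $I$. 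Integrating \eqref{flux} over $I$ then forces $\Phi_{u^{\varepsilon_j}}\to\infty$, which contradicts the uniform Lipschitz bound. If instead $\gamma_1<0$, I will argue symmetrically with the roles of $u$ and $v$ reversed, using $k_\varepsilon$ and \eqref{eq:hk-structure1}.

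The delicate case is $\gamma_2>0$ with $\gamma_1<0$ and $r_u<r_v$. Here the singular factor $u^{\gamma_1}$ stays bounded on $(r_u,r_v)$ and gives no contradiction directly. I would first try the comparison \eqref{eq:hk-structure1}: it says $v$ solves an equation whose source is comparable to that of $u$. Applying it to $k_{\varepsilon_j}$ near $r_v$, where $u$ is bounded below, and exploiting $k_{\varepsilon_j}\ge\lambda_0\varepsilon_j^{\gamma_2}u^{\gamma_1}$ in the region $\{v^{\varepsilon_j}\le\varepsilon_j\}$, I would aim to show that the flux of $v$ keeps increasing on a region where the limit $v$ vanishes identically. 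That contradicts $v\equiv0$ there. Whether this argument closes in that semi-singular configuration is the step I am least sure of.
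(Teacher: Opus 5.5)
The main gap is in item (3). Your blow-up mechanism works exactly when the component that vanishes between the two free boundaries enters the other component's source with a \emph{negative} power. That means $\gamma_2<0$ if $r_u<r_v$, and, by the symmetric argument, $\gamma_1<0$ if $r_v<r_u$. Under $r_u<r_v$ itself, $\gamma_1<0$ gives nothing, since $u$ is bounded below there.

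The complementary configurations are not only the semi-singular case you flag. They include the whole regular regime $\gamma_1,\gamma_2\in(0,1)$, which your proposal never treats. The route you sketch for $\gamma_2>0$ cannot work, for three reasons:
\begin{itemize}
\item the lower bound $\lambda_0\varepsilon_j^{\gamma_2}u^{\gamma_1}$ tends to $0$;
\item $v^{\varepsilon_j}\equiv0$ on $[0,r_{v^{\varepsilon_j}}]$, so its flux vanishes there;
\item on compact subintervals of $(r_u,r_v)$, the upper bound $k_{\varepsilon_j}\le\Lambda_0\max\{u^{\varepsilon_j},\varepsilon_j\}^{\gamma_1}\max\{v^{\varepsilon_j},\varepsilon_j\}^{\gamma_2}$ forces the increments of $r^{n-1}\Psi_{v^{\varepsilon_j}}$ to $0$.
\end{itemize}
The $v$-equation is therefore consistent with $v\equiv0$ there, and no contradiction can come from it.

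The missing idea, which is the paper's Case b, is to use the $u$-equation with a \emph{vanishing} mechanism rather than a blow-up one. For $r<r_v$, the $u$-source on $[0,r]$ carries the factor $\max\{v^{\varepsilon_j},\varepsilon_j\}^{\gamma_2}\to0$, while $r^{n-1}\Phi_{u^{\varepsilon_j}}$ vanishes at $r_{u^{\varepsilon_j}}$. Hence $\Phi_{u^{\varepsilon_j}}\to0$, and so $u'\equiv0$ on $(r_u,r_v)$.

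When $\gamma_1<0$, the uniform bound on $\int\max\{u^{\varepsilon_j},\varepsilon_j\}^{\gamma_1}\,d\tau$ does not follow from the Lipschitz \emph{upper} bound on $u^{\varepsilon_j}$. You can obtain it by multiplying the $u$-equation by $(u^{\varepsilon_j})'\ge0$ and using $\int_0^{M_1}s^{\gamma_1}\,ds<\infty$, since $(u^{\varepsilon_j})'$ is bounded below by an increasing function of the flux.

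Note, however, that $u'\equiv0$ on $(r_u,r_v)$ contradicts the definition of $r_u$ only if $u(r_u)=0$, i.e.\ only if $r_u>0$. If $r_u=0$, it only says that $u$ is a positive constant on $[0,r_v]$, and this really occurs. Take $p\equiv q\equiv2$, $h=k=u^{1/2}v^{1/2}$ and $M_1=1$. The bound $2\Delta u\le\sqrt{M_2}$ gives $u\ge1-\sqrt{M_2}/(4n)>0$ in $B_1$. For $M_2$ small, $v$ has a dead core (compare with a small multiple of $(|x|-\tfrac12)_+^4$), on which $u$ is radial and harmonic, hence constant. The same happens for $\gamma_1<0<\gamma_2$.

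So your difficulty is not merely technical. In regimes (i)--(ii), item (3) needs an extra hypothesis such as $\min\{r_u,r_v\}>0$, under which the vanishing argument closes. In regime (iii), your blow-up argument already covers both orderings.

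Item (1) matches the paper. In item (2), your H\"older-splitting modulus is more careful than the paper's argument, which passes from boundedness of the fluxes directly to a uniform $o_\delta(1)$ bound. As sketched, though, it only controls $[t_v,1]$, where both lower bounds \eqref{1lephi}--\eqref{1lepsi} hold. Near the free boundary $(u^\varepsilon)'\to0$, so a bound like $u^\varepsilon\ge c(\tau-r_{u^\varepsilon})$ is false. Equicontinuity of the fluxes up to the free boundary therefore still needs an argument, and the paper does not supply one either.
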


\begin{proof} 
First, for notational accuracy, we need to explicitly include the $\varepsilon$-dependence of all components in Proposition \ref{prop:uniform-Lip}.

The three cases in Proposition \ref{prop:uniform-Lip} show that our solution pairs are uniformly Lipschitz on $[0,1]$ with a constant independent of $\varepsilon$, implying that $\Phi_{u^\varepsilon} (r), \Psi_{v^\varepsilon} (r )$ are uniformly bounded.
Since we also have the following integral representations
\begin{equation}\label{eq:representation}
    \Phi_{u^\varepsilon} (r) = \int_{r_{u^\varepsilon}}^r \tau^{n-1} h_{\varepsilon}(\tau , u^\varepsilon(\tau), v^\varepsilon(\tau)) d \tau  ,\qquad 
\Psi_{v^\varepsilon}(r)= \int_{r_{v^\varepsilon}}^r \tau^{n-1} k_{\varepsilon}(\tau ,u^\varepsilon(\tau), v^\varepsilon(\tau)) d \tau ,
\end{equation}
both integrals must be universally bounded, independent of $\varepsilon$. In particular, for any  $r > r_{u^\varepsilon }$, we have
$$
\int_{r}^{r+\delta} \tau^{n-1} h_{\varepsilon}(\tau , u^\varepsilon(\tau), v^\varepsilon(\tau)) d \tau = o_{\delta} (1),
$$
which in turn implies, 
using that $\Phi_{u^\varepsilon} (r)$ is increasing in $r$, 
$$ 0 \leq \Phi_{u^\varepsilon} (r + \delta ) -\Phi_{u^\varepsilon} (r ) \leq o_{\delta} (1),
$$
and hence $ \Phi_{u^\varepsilon} (r)$ is uniformly continuous in $r$, and independent of $\varepsilon$.  A similar argument works for $ \Psi_{v^\varepsilon} (r)  $.
From this, one derives uniform $C^1$ smoothness for $u^{\varepsilon}, v^{\varepsilon}$.

In particular,  for a subsequence $\varepsilon_j\downarrow 0$ and nondecreasing functions $u, v$,  
$$\lim_j (u^{\varepsilon_j}, v^{\varepsilon_j})=: (u,v) ,$$
exist. Actually, the convergence is uniform in $C^1$ space, and hence $(u,v)$ are continuously differentiable over $(0,1)$, which proves claims (1)--(2) in the corollary.

This convergence,  in turn, implies 
\begin{equation}\label{eq:r=v}
    \lim_j (r_{u^{\varepsilon_j}}, r_{v^{\varepsilon_j}})=: (r_u,r_v) 
\end{equation}
exist. 

We now want to show $r_u = r_v$. Suppose this does not hold, and $r_u < r_v$. 
Then there exist numbers $a, b$ such that $r_u < a < b < r_v$, where for all $r\in[a, b]$, we have $u(r) \ge c_0 > 0$ and $v(r)=0$.

By uniform convergence, for all sufficiently large $j$
\[
u^{\varepsilon_j}(r) \ge \frac{c_0}{2} > 0 \qquad \text{and} \qquad 0 \le v^{\varepsilon_j}(r) \le \varepsilon_j \xrightarrow{j\to\infty} 0 \qquad \text{for all } r \in [a, b].
\]
We now distinguish two different cases for the power regimes on the interval $[a, b]$:

 \begin{itemize}
\item \underline{\textit{Case a: $\gamma_2 < 0$.}}
Using the integral representation \eqref{eq:representation} and the lower structural bound in \textup{(P2)}, for any $r \in (a, b)$ and all sufficiently large $j$ (where $v^{\varepsilon_j} \le \varepsilon_j$ on $[a,b]$) we have
\[
C \ge \Phi_{u^{\varepsilon_j}}(r) \ge \int_a^r \tau^{n-1} h_{\varepsilon_j}(\tau, u^{\varepsilon_j}, v^{\varepsilon_j}) \, d\tau \ge \lambda_0 \int_a^r \tau^{n-1} (u^{\varepsilon_j})^{\gamma_1} ( \varepsilon_j)^{\gamma_2} \, d\tau.
\]
Since $u^{\varepsilon_j} \ge c_0/2 > 0$ on $[a, b]$, the term $(u^{\varepsilon_j})^{\gamma_1}$ is bounded from below away from zero by a positive constant $c_1 > 0$. 
Therefore,
\[
C \ge \Phi_{u^{\varepsilon_j}}(r) \ge \lambda_0 c_1 \varepsilon_j^{\gamma_2} \int_a^r \tau^{n-1} \, d\tau = \frac{\lambda_0 c_1}{n} (r^n - a^n) \varepsilon_j^{\gamma_2}.
\]
Using $\gamma_2 < 0$, one gets $\varepsilon_j^{\gamma_2} \to +\infty$ as $j \to \infty$. Since $r > a$, the right-hand side tends to $+\infty$, which directly contradicts the uniform upper bound $C \ge \Phi_{u^{\varepsilon_j}}(r)$.

\item \underline{\textit{Case b: $\gamma_2 > 0$.}} 
Again let $r \in (r_u, r_v)$ be arbitrary. For sufficiently large $j$, we have $r_{u^{\varepsilon_j}} < r < r_{v^{\varepsilon_j}}$, which implies $v^{\varepsilon_j}(\tau) \le \varepsilon_j$ for all $\tau \in [r_{u^{\varepsilon_j}}, r]$. Thus 
\[\Phi_{u^{\varepsilon_j}}(r) \le \Lambda_0 \varepsilon_j^{\gamma_2} \int_{r_{u^{\varepsilon_j}}}^r \tau^{n-1} \big(\max\{u^{\varepsilon_j}(\tau), \varepsilon_j\}\big)^{\gamma_1} \, d\tau.\]
Using the Lipschitz bound $u^{\varepsilon_j}(\tau) \le C(\tau - r_{u^{\varepsilon_j}})$, the integrand is bounded by $C (\tau - r_{u^{\varepsilon_j}})^{\min\{\gamma_1, 0\}} \in L^1(r_u, r)$.
Therefore, by the Lebesgue Dominated Convergence Theorem, the integral remains uniformly bounded, giving us the following
\[
\Phi_{u^{\varepsilon_j}}(r) \le C \varepsilon_j^{\gamma_2} \longrightarrow 0 \qquad \text{as } j \to \infty.
\]
Thus $\lim_{j\to\infty} \Phi_{u^{\varepsilon_j}}(r) = 0$ for all $r \in (r_u, r_v)$, forcing $u \equiv 0$ on $(r_u, r_v)$, which contradicts $r_u = \inf\{r : u(r) > 0\}$.

\end{itemize}

 A similar argument applies when $r_u > r_v$, using the representation for $\Psi_{v^\varepsilon}$, where we look at the cases when $ |\gamma_1| > 0 $. 

\medskip

\noindent

When at least one of $\gamma_j = 0$, we cannot argue as above. Indeed, in this case, both equations reduce to scalar problems: $u$ gives rise to an Alt-Phillips-type scalar problem, while $v$ solves an obstacle-type problem with the right-hand side given by the determined $u^{\gamma_1}$. Depending on the boundary values, this problem may well have a solution such that $r_u < r_v$.

This implies that when at least one of $\gamma_j = 0$, the coincidence sets do not necessarily coincide. Nevertheless, if $\gamma_2 = 0$ then $r_u < r_v$ is possible but not the reverse, and a parallel argument holds when $\gamma_1=0$, i.e., $r_v \leq  r_u$. This follows from integral representations directly.

\begin{acknowledgements}

The author thanks KTH Royal Institute of Technology for its hospitality during this research. Special thanks go to Professor Henrik Shahgholian for suggesting the problem and for continuous guidance.

\end{acknowledgements}

\end{proof}


\begin{thebibliography}{99}

\bibitem{AlamriUrbano2026}
Y.~Alamri and J.~M.  Urbano,   \emph{The two-phase Alt-Phillips problem for quasilinear operators.} (2026). \textit{arXiv preprint arXiv:2604.05245}.



\bibitem{AltPhi86}
H.~W. Alt and D.~Phillips,
\emph{A free boundary problem for semilinear elliptic equations},
J. Reine Angew. Math. \textbf{368} (1986), 63--107.


\bibitem{AraTei13}
D.~Ara\'{u}jo and E.~Teixeira,
\emph{Geometric approach to nonvariational singular elliptic equations},
Arch. Ration. Mech. Anal. \textbf{209} (2013), 1019--1054.

\bibitem{BiagiValdinociVecchi2020}
S. Biagi, E. Valdinoci, E. Vecchi,
\emph{A symmetry result for cooperative elliptic systems with singularities},
{Publ. Mat.} \textbf{64} (2020), 621--652.


\bibitem{Chen2006}
Y.~Chen, S.~Levine, and M.~ Rao, 
\emph{Variable exponent, linear growth functionals in image restoration},
SIAM Journal on Applied Mathematics, \textbf{66} (2006), 1383--1406.




\bibitem{Diening2011}
L.~Diening, P.~Harjulehto, P.~Hästö, and M.~Růžička, 
\emph{Lebesgue and Sobolev Spaces with Variable Exponents}, 
Lecture Notes in Mathematics 2017, Springer, Heidelberg, 2011.



\bibitem{ElShah}
L.~El Hajj and H.~Shahgholian, 
\emph{A free boundary problem for systems (The symmetric regime)}, 
Communications on Pure and Applied Analysis, \textbf{24} (2025), 1280--1295.


\bibitem{El-Je-Sh-2}
L.~El Hajj, S. Jeon,  and H.~Shahgholian, 
\emph{Existence theory for non-variational  systems  with free boundaries.} Preprint.  https://arxiv.org/abs/2607.16767


\bibitem{FanZhao2001}
X.~Fan and D.~Zhao, 
\emph{On the spaces $L^{p(x)}(\Omega)$ and $W^{m,p(x)}(\Omega)$}, 
Journal of Mathematical Analysis and Applications \textbf{263} (2001), 424--446.


\bibitem{Juli} J. Fernández Bonder, S. Martínez and  N. Wolanski,
\emph{A free boundary problem for the p(x)-Laplacian},
Nonlinear Analysis: Theory, Methods and Applications,
\textbf{72} (2010),  1078--1103.



\bibitem{Ferra} F. ~Ferrari, M.~ Jacob and C. ~Lederman,  \emph{Two-phase free boundary problems for operators with nonstandard growth}, La Matematica 5, \textbf{27} (2026), https://doi.org/10.1007/s44007-026-00202-3


\bibitem{Harj2007} P. Harjulehto, P. Hästö, M. Koskenoja, T. Lukkari and N. Marola, \emph{An obstacle problem and superharmonic functions with nonstandard growth}, Nonlinear
Anal. \textbf{67} (2007), 3424–-3440.


\bibitem{Led} C.  Lederman and  N. Wolanski,
\emph{Inhomogeneous minimization problems for the p(x)-Laplacian},
Journal of Mathematical Analysis and Applications,
\textbf{475} (2019),  423--463.

\bibitem{MDS-Nor} E. Moreira dos Santos and G. Nornberg, \emph{Symmetry properties of positive solutions for fully nonlinear elliptic systems}, J. Differential Equations \textbf{269} (2020), 4175-–4191.



\bibitem{Ruzicka2000}
M.~Růžička, 
\emph{Electrorheological Fluids: Modeling and Mathematical Theory}, 
Lecture Notes in Mathematics, Vol.~1748, Springer, Berlin, 2000.
\end{thebibliography}
\end{document}